\documentclass{article}
\usepackage{graphicx} 
\usepackage{amsmath}
\usepackage{amssymb}
\usepackage{amsopn}
\usepackage{amsthm}
\usepackage{amsfonts}
\usepackage[right]{eurosym}
\usepackage{mathtools}
\usepackage{comment}
\usepackage{mathrsfs}
\usepackage{hyperref}

\usepackage{color} 
\usepackage{graphicx}

\usepackage{tikz-cd}

\DeclareMathOperator{\bC}{\mathbb{C}}
\DeclareMathOperator{\bZ}{\mathbb{Z}}

\DeclareMathOperator{\bP}{\mathbb{P}}
\DeclareMathOperator{\cA}{\mathcal{A}}
\DeclareMathOperator{\cF}{\mathcal{F}}
\DeclareMathOperator{\cB}{\mathcal{B}}

\DeclareMathOperator{\cV}{\mathcal{V}}
\DeclareMathOperator{\cO}{\mathcal{O}}

\DeclareMathOperator{\Hilb}{Hilb}
\DeclareMathOperator{\Sym}{Sym}
\DeclareMathOperator{\Pic}{Pic}
\DeclareMathOperator{\Kum}{Kum}
\DeclareMathOperator{\Div}{div}
\DeclareMathOperator{\gr}{\mathbf{Gr}}

\newcommand{\hilb}[2][n]{#2^{[#1]}}
\newcommand{\sym}[2][n]{#2^{(#1)}}

\newtheorem{proposition}{Proposition}[section]
\newtheorem{remark}[proposition]{Remark}
\newtheorem{lemma}[proposition]{Lemma}
\newtheorem{corollary}[proposition]{Corollary}
\newtheorem{theorem}[proposition]{Theorem}

\title{On Fujita's conjecture for a general hyperk\"ahler manifold in the standard series of examples}
\author{Alessandro Pilastro}
\date{}

\begin{document}

\maketitle
\begin{abstract}
   For the moduli spaces $\Sigma_{d,t}^n$ and $\Upsilon_{d,t}^n$ of polarized hyperk\"ahler manifolds of $\hilb{K3}$-type and $\Kum^n$-type respectively, with polarization with square $2d$ and divisibility $t$, we study general base point freeness and very ampleness of the polarization. We provide cases where these moduli space are connected and a formula characterizing when these spaces are non-empty.
\end{abstract}

\section*{Introduction}

Fujita's conjecture states that given an ample holomorphic line bundle $L$ on a compact complex manifold $X$ of complex dimension $n$, the line bundle $K_X\otimes L^{\otimes m}$ is base point free for all $m\geq n+1$ and very ample for $m\geq n+2$. In general, the conjecture is widely open, as only in low dimensions the conjecture is known to hold and in some specific examples of manifolds. Two important families that satisfy the conjecture are $K3$ surfaces and Abelian varieties, where the conjecture holds with lower bounds of $2$ and $3$, respectively. These values are stronger compared to what the conjecture suggests, so it would be interesting to study the conjecture on hyperk\"ahler manifolds.

Let $\Sigma_{d,t}^n$ be the moduli space polarized of hyperk\"ahler manifolds of $\hilb{K3}$-type with polarization of square $2d$ and divisibility $t$; let $\Upsilon_{d,t}^n$ be the analogous moduli space for $\Kum^n$-type manifolds. In \cite{riess}, U.Rie{\ss} proves that the general element of $\Sigma_{d,1}^2$ and $\Sigma_{d,2}^2$ is base point free, while in \cite{Varesco_2023}, M.Varesco proves a similar statement for the $\Kum^n$-type for $n=2,3,4$ and all possible divisibilities, except for a finite number of cases. The results of U.Rie{\ss} is generalized by O.Debarre in \cite{debarre2020hyperkahlermanifolds}, where bounds in terms of $d$ and $n$ are given for the general element of the moduli spaces $\Sigma_{d,1}^n$ and $\Sigma_{d,2}^n$ to be base point free and very ample. 

Inspired by these results, we formalize the techniques used in the aforementioned works to generalize the results in higher dimensions for all possible divisibility in both the case of $\hilb{K3}$-type and $\Kum^n$-type hyperk\"ahler. 

\begin{theorem}\label{theo_main}
    Let $n,t,d$ be positive integers with $n\geq 2$ and let $\tau:=\frac{t^2}{2(t-1)}$. Then the following holds:
    \begin{itemize}
        \item let $(X,H)$ be a general element of $\Sigma_{d,n}^1$ or $\Upsilon_{d,n}^1$ then $H$ is base point free and $H^{n+2}$ is very ample. Furthermore, if $d\geq n+1$ then the polarization of a general element of $\Sigma_{d,n}^1$ is very ample, if $d\ge n+4$ then the polarization of a general element of $\Upsilon_{d,n}^1$ is very ample;
        \item if $d\geq (\tau-1)n+\tau+1$ there exists a connected component of $\Sigma_{d,n}^t$ such that the polarization of a general element $(X,H)$ is base point free and $H^{n+2}$ is very ample;
        \item if $d\geq (\tau-1)n+2\tau+1$ there exists a connected component of $\Sigma_{d,n}^t$ such that the polarization of a general element is very ample;
        \item if $d\geq (\tau-1)n+2\tau-1$ there exists a connected component of $\Upsilon_{d,n}^t$ such that the polarization of a general element $(X,H)$ is base point free and $H^{n+2}$ is very ample;
        \item if $d\geq (\tau-1)n+3\tau-1$ there exists a connected component of $\Upsilon_{d,n}^t$ such that the polarization of a general element is very ample;
        These hold unless the considered moduli space is empty.
    \end{itemize}
\end{theorem}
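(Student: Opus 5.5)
The plan is to specialize to explicit models. Then base point freeness and very ampleness, being open conditions in flat families, propagate from one point of a connected component of the moduli space to its general element.

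For $\hilb{K3}$-type, the model is $X=\hilb{S}$ for a K3 surface $S$ with $\Pic(S)=\bZ L$ and $L^2=2k$. For $\Kum^n$-type, it is $X=\Kum^{n}(A)$ for an abelian surface $A$ with a polarization $L$ of type $(1,k)$. We have $H^2(X,\bZ)\supset H^2(S,\bZ)\oplus\bZ\delta$ with $\delta^2=-2(n-1)$ (respectively $-2(n+1)$). We take the class
\[
H = aL_n - b\delta,
\]
where $L_n$ is the divisor induced by $L$ on the symmetric product. Its square is $q(H)=2ka^2-2(n-1)b^2$ (respectively with $n+1$), and its divisibility is governed by $\gcd(a,\,2(n-1))$ or $\gcd(a,\,2(n+1))$, since $\delta$ has divisibility $2(n-1)$ (respectively $2(n+1)$).

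For divisibility one we take $b=0$. Then $H=L_n$ is nef but not ample, so I would rather use $H=L_n-\delta$ with appropriate $k$. Positivity of $H$ on $\hilb{S}$ is governed by the classical criterion (Catanese–Göttsche, Beltrametti–Sommese): $L_n-\delta$ is base point free if $L$ is $(n-1)$-very ample, which holds by Knutsen's criterion when $L^2\geq 4(n-1)$. Likewise $L_n-\delta$ is very ample if $L$ is $n$-very ample, i.e. $L^2\ge 4n$ in Picard rank one. Translating into $d=k-(n-1)$ gives the stated bounds $d\geq n+1$. For Kummers, the analogous $k$-very ampleness criteria on abelian surfaces (Bauer–Szemberg) cost a few extra units, which explains the shift $d\ge n+4$.

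The statement that $H^{n+2}$ is very ample will follow from base point freeness together with a Fujita-type argument. Since $K_X=0$ and $H$ is big and nef, the linear system $mH$ is very ample for $m\ge n+2$ once $H$ is base point free, by Castelnuovo–Mumford regularity (Mumford's lemma: a globally generated ample $H$ with $H^i(X,(m-i)H)=0$, which holds by Kodaira vanishing).

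For divisibility $t>1$, we need $H=aL_n-b\delta$ with $t\mid a$ and $b\ne0$, and we want to realize the above with $L$ replaced by a suitable multiple. Write $H=b(cL_n-\delta)+\dots$. More concretely, I would choose $H$ so that $H = (\text{bpf/very ample piece}) + (\text{nef piece})$, namely
\[
H = m_1\,(L_n-\delta) + m_2\,L_n ,
\]
a sum of a base point free (respectively very ample) class and a base point free class, which is then again base point free (respectively very ample). Minimizing $q(H)$ subject to $H$ having divisibility $t$ and the positivity hypotheses leads to an optimization of $2ka^2-2(n-1)b^2$. The quantity $\tau=t^2/(2(t-1))$ arises as the minimal ratio from balancing $a=t\alpha$ against $b$ with $\gcd$ constraints. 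This yields the thresholds $(\tau-1)n+\tau+1$ and so on. Since the Hilbert-scheme locus lies in a single component, we obtain only "a connected component", and the result is vacuous when the moduli space is empty.

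The main obstacle is this arithmetic step: for each admissible $(d,t)$, producing integers $k,a,b$ with the correct square and divisibility, such that the decomposition into very ample plus nef pieces satisfies Knutsen/Bauer–Szemberg bounds exactly at the claimed thresholds. I would first treat $b=1$ and choose $k$ as a quadratic residue solution, then check via Eichler's criterion that the resulting orbit lies in the intended component.
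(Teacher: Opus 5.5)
The divisibility $t\ge 2$ part has a genuine gap. Your decomposition $H=m_1(L_n-\delta)+m_2L_n$ uses only higher-order very ampleness of $L$ itself, so it needs $L^2=2k\ge 4(n-1)$ (resp.\ $4n$). With $t\mid a$ and $0<b\le a$ this forces $d=ka^2-(n-1)b^2\ge (n-1)a^2\ge (n-1)t^2$. That grows like $t^2 n$, whereas the claimed thresholds grow like $\frac{t}{2}n$: for $t=2$, $n=5$ your construction needs $d\ge 28$, while the statement claims $d\ge 8$. No optimisation over $a,b$ under gcd constraints produces $\tau$.

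The missing ingredient is the criterion for \emph{multiples} (Propositions~\ref{prop_k_very_ample_k3} and~\ref{prop_k_very_ample_abelian}). For $\Pic(S)=\bZ L$, $L^2=2e$ and $a\ge 2$, the bundle $L^{\otimes a}$ is $k$-very ample iff $k\le 2(a-1)e-2$, which is far better than the $k\le e/2$ available for $L$. The paper takes $H=tL_n-\delta=\det\big((L^{\otimes t})^{[n]}\big)$ and applies Proposition~\ref{prop_det_of_tautological_bpf_va} to $L^{\otimes t}$ rather than to $L$. Then $H$ is base point free if $n-1\le 2(t-1)e-2$ and very ample if $n\le 2(t-1)e-2$, with $\Div(H)=\gcd(t,2(n-1))=t$ and $d=t^2e-(n-1)$. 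Substituting $e\ge \frac{n+1}{2(t-1)}$ gives exactly $d\ge \tau(n+1)-(n-1)=(\tau-1)n+\tau+1$, and similarly for the other bounds; that is where $\tau$ comes from. Your phrase ``with $L$ replaced by a suitable multiple'' was the right instinct, but you did not follow it through. Note also that the paper only uses $a=t$, $b=1$, so it only reaches $d\equiv-(n-1)\pmod{t^2}$. It needs no Eichler-type bookkeeping: the component is simply the one containing the constructed pair.

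The divisibility-one part also falls short. Replacing $L_n$ by $L_n-\delta$ costs you the unconditional base point freeness. $L_n-\delta$ is base point free only for $L^2\ge 4(n-1)$, i.e.\ $d\ge n-1$ (and $d\ge n+2$ in the Kummer case), while the statement claims it for every $d$. The idea you discarded is Lemma~\ref{lemma_bfp_does_not_require_ampleness}: ampleness is not needed at the special point. $L_n=\mu^*L^{(n)}$ is base point free as soon as $L$ is, primitive of divisibility one with $q(L_n)=2d$, and big and nef. So its higher cohomology vanishes and $h^0$ is constant. A very general deformation has Picard rank one, where the class becomes ample by the projectivity criterion, and base point freeness persists by openness.

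Finally, a caution that applies to the paper's closing lemma as well: the regularity argument does not give exponent $n+2$. Here $\dim X=2n$, and $0$-regularity of $(m-1)H$ with respect to $H$ requires $H^i(X,(m-1-i)H)=0$ for all $1\le i\le 2n$. Kodaira vanishing only handles $i<m-1$. For $i=2n$, Serre duality gives $H^0(X,(2n+1-m)H)^\vee\ne 0$ whenever $m\le 2n+1$. So this argument yields $(2n+2)H$ very ample, not $(n+2)H$. The paper's lemma is stated for a hyperk\"ahler manifold of dimension $n$, which is the source of the slip.
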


In more interesting situations, to which we can apply the theorem, occur when these moduli spaces have one connected component. In \cite{onorati2022connectedcomponentsmodulispaces} and \cite{zbMATH06387271}, the authors provide a formula to compute the numbers of connected components of these moduli spaces. For example, with divisibilities $t=1,2$ and $t=p^a$, with $p>2$ prime, and $p^{a+1}$ not dividing $n$ the moduli spaces $\Sigma_{d,t}^n$ and $\Upsilon_{d,t}^n$ are connected. We conclude by providing, in Proposition \ref{prop_char_of_non_emptyness}, a characterization of when the considered moduli spaces are non-empty. This allows us to improve the inequalities presented in Theorem \ref{theo_main} in any specific case. 

The structure of the article is the following.

In Section \ref{sect_tautological}, we recall the main properties of the Hilbert scheme of points on a surface, with a focus on the tautological vector bundle $\hilb{L}$ on $\hilb{S}$ associated to any line bundle $L$ on the surface $S$. We prove how this construction preserves base point freeness. 

In Section \ref{sect_grass}, we provide the general notion of embeddings into Grassmannians via suitable vector bundles. In particular, we show how the tautological vector bundle $\hilb{L}$ defines a map into a Grassmannian if $L$ is $(n-1)$-very ample, which is an embedding if $L$ is $n$-very ample. Taking the determinant of $\hilb{L}$ allows us to obtain a base point free and a very ample line bundle on $\hilb{S}$.

Sections \ref{sect_hyper} and \ref{sect_moduli} recall the definition of hyperk\"ahler manifolds and provide the statements of some properties that we will use throughout the article. In particular, we present the lattice structure of the Picard group of $\hilb{K3}$-type and $\Kum^n$-type hyperk\"ahler manifolds. We also show why base point freeness and very ampleness are open properties in families for hyperk\"ahler manifolds.

In section \ref{sect_main_theorems}, we combine the results to obtain the main theorem. The main idea of the proof is to build an element of the moduli space with the required properties. The openness of base point freeness and the very ampleness allows us to conclude that the general elements in the same connected component share the same properties. 

\section{Tautological vector bundles on $\Hilb^n$ of a surface}\label{sect_tautological}

Let $S$ be a smooth, projective complex surface and let $\hilb{S}:=\Hilb^n(S)$ the Hilbert scheme parameterizing $0-$dimensional subschemes of length $k$ of $S$. For an introduction on the subject one can refer, among other sources to \cite{nakajima_hilbert_schemes}.  Since $\hilb{S}$ is a fine moduli space we have a universal family $$\Xi_k=\{(x,\xi)\in S\times \hilb{S}|x\in \xi\}\subset S\times \hilb{S}$$ together with the natural projections $$S\xleftarrow{\pi_S}\Xi_k\xrightarrow{\pi_{\hilb{S}}}\hilb{S}.$$
Given any coherent sheaf on $F$ on $S$ we can construct the so called \emph{tautological sheaf associated to $F$} on $\hilb{S}$ as follows: $$\hilb{F}:=\pi_{\hilb{S}*}(\pi^*_S(F)).$$
Since the projection $\pi_{\hilb{S}}$ is flat of degree $k$, if $F$ is a vector bundle of rank $m$,  then the associated tautological bundle $\hilb{F}$ is a vector bundle of rank $mk$ with fibers
\begin{align}\label{eq_fiber_of_F^[n]}
    \hilb{F}(\xi)\cong H^0(\xi,F|_\xi),\ \forall \xi\in\hilb{S}.
\end{align}

The tautological bundles have the following properties.

\begin{proposition}[\cite{bini2022familiesbigstablebundles}, (4.5)]\label{prop_zeroth_cohom_F^[n]}
Let $S$ be a smooth, irreducible projective surface and let $F$ be a coherent sheaf on $S$ then $H^0(\hilb{S},\hilb{F})\cong H^0(S,F)$.
\end{proposition}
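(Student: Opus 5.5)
We want $H^0(\hilb{S},\hilb{F})\cong H^0(S,F)$. The plan is to pass through the symmetric product and exploit that, away from a codimension-two locus, everything is governed by reduced subschemes. By definition, $H^0(\hilb{S},\hilb{F})=H^0(\Xi_n,\pi_S^*F)$, since $\pi_{\hilb{S}}$ is finite and pushforward along it preserves global sections. There is an obvious map
\begin{equation*}
H^0(S,F)\to H^0(\Xi_n,\pi_S^*F)=H^0(\hilb{S},\hilb{F}),\qquad s\mapsto \pi_S^*s .
\end{equation*}
The goal is to show that this map is an isomorphism.

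For \emph{injectivity}, note that $\pi_S:\Xi_n\to S$ is surjective. Better, over the open set of reduced subschemes, $\Xi_n$ is locally isomorphic to $S\times$(an open subset of $S^{n-1}$), so $\pi_S$ is flat there with dense image. Hence $\pi_S^*s=0$ forces $s$ to vanish on a dense open subset of $S$. For locally free, or more generally torsion-free, $F$ this gives $s=0$. For a general coherent $F$ one must handle torsion sections separately. Torsion supported at points is still detected, because subschemes containing a given point form a family that dominates it; an honest argument uses that $\pi_S$ is flat. Indeed, $\Xi_n$ is known to be irreducible of dimension $2n$, and $\pi_S$ is flat because $\Xi_n\to S$ is locally trivial in the analytic topology over $S$. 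Then $\pi_S^*$ is injective on sections by faithful flatness of the surjective flat map.

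For \emph{surjectivity}, first reduce to the locus of reduced subschemes. Let $U\subset \hilb{S}$ be the open set of subschemes with at least $n-1$ distinct points. Its complement has codimension $2$, and $\hilb{S}$ is smooth, so sections of the locally free (when $F$ is) sheaf $\hilb{F}$ extend across it. It therefore suffices to analyse sections over $U$, and even over the open set $U_0$ of reduced subschemes, where we then check extendability. On $U_0$, pulling back to the ordered configuration space $S^n_\circ\subset S^n$, the bundle becomes $\bigoplus_i p_i^*F$ with the $\mathfrak S_n$-action permuting the factors. An invariant section is therefore determined by a section $\sigma$ of $p_1^*F$ over $S^n_\circ$ that is invariant under the permutations fixing the first index. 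Restricting $\sigma$ to the fibres $\{x\}\times(\text{configurations of the other } n-1 \text{ points})$ gives, by Hartogs (the removed diagonals have codimension $2$ in $S^{n-1}$), a section of $F(x)\otimes\mathcal O$ on the projective variety $S^{n-1}$. Such a section is constant. Hence $\sigma=p_1^*s$ for a section $s$ of $F$ over $S$, defined after extension across codimension $2$.

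The main obstacle is the \emph{general coherent} case together with the extension steps: Hartogs-type extension needs $F$ reflexive or locally free. I would first prove the statement for locally free $F$ as above. For arbitrary coherent $F$ I would then take a locally free presentation $E_1\to E_0\to F\to 0$ and use exactness of $F\mapsto\hilb{F}$, which follows from flatness and finiteness of $\pi_{\hilb{S}}$. This gives left-exact sequences of $H^0$, and a five-lemma argument would finish the proof. It requires care, since $H^0$ is only left exact. A cleaner route is to verify directly that $H^0(\Xi_n,\pi_S^*F)=H^0(S,\pi_{S*}\pi_S^*F)$ and that $\pi_{S*}\mathcal O_{\Xi_n}=\mathcal O_S$. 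The latter holds because $\pi_S$ has connected fibres (each fibre is birational to $\hilb[n-1]{S}$) and $S$ is normal. One then concludes with the projection formula, which is valid since $\pi_S$ is flat. I would take this last route as the actual proof.
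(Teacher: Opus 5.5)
The paper does not prove this statement; it takes it from its source. Your final route is complete when $F$ is locally free, and in that case flatness plays no role. The isomorphism $\pi_{S*}\pi_S^*F\cong F\otimes\pi_{S*}\mathcal{O}_{\Xi_n}$ holds for any morphism. The equality $\pi_{S*}\mathcal{O}_{\Xi_n}=\mathcal{O}_S$ follows from Stein factorization, provided you also note that $\Xi_n$ is reduced, not just irreducible: it is finite flat over the smooth $\hilb{S}$, hence Cohen--Macaulay, and it is generically reduced. This covers the only case the paper actually uses, $F=L$ a line bundle.

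For an arbitrary coherent $F$, however, the step ``the projection formula is valid since $\pi_S$ is flat'' is a genuine gap. Flatness does not make $F\otimes f_*\mathcal{O}_X\to f_*f^*F$ surjective for torsion $F$. Take the family of twisted cubics over a smooth curve whose flat limit is a plane nodal cubic with an embedded point (Hartshorne, Example III.9.8.4). There $f_*\mathcal{O}_X=\mathcal{O}_Y$, yet $f_*f^*k(y_0)=H^0(\mathcal{O}_{X_{y_0}})$ is two-dimensional.

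The missing ingredient is control of $H^0$ on \emph{every} fibre: you need $h^0(\mathcal{O}_{Z_x})=1$ for all $x$, where $Z_x=\pi_S^{-1}(x)$. This does hold.
\begin{itemize}
\item $Z_x$ is Cohen--Macaulay, being a fibre of a flat map from a CM scheme to a regular one.
\item It is generically reduced, so it is reduced.
\item It is irreducible: it is equidimensional, and the strata where $x$ has multiplicity $k\geq 2$ have dimension $2n-k-1<2n-2$.
\end{itemize}
Then $\pi_{S*}\mathcal{O}_{\Xi_n}\otimes k(x)\to H^0(\mathcal{O}_{Z_x})$ is surjective for every $x$, since the target is spanned by the constant $1$. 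Cohomology and base change then finishes the argument: locally $\pi_{S*}\pi_S^*(-)=\operatorname{Hom}(W,-)$, and this surjectivity forces $W$ to be free. Hence $F\mapsto\pi_{S*}\pi_S^*F$ is exact and equals $F\otimes\pi_{S*}\mathcal{O}_{\Xi_n}\cong F$.

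Your justification of flatness also has to be replaced, because $\Xi_n\to S$ is not analytically locally trivial. For $n=2$ the fibre over $x$ is $\mathrm{Bl}_xS$. On a K3 surface, $\mathrm{Bl}_xS\cong\mathrm{Bl}_yS$ would force an automorphism of $S$ sending $x$ to a general nearby $y$, which is impossible because $\operatorname{Aut}(S)$ is discrete. Flatness does hold, by miracle flatness: $\Xi_n$ is CM, $S$ is regular, and every fibre has dimension $2n-2$ by Brian\c{c}on's bound $\dim\Hilb^k_x(S)=k-1$.
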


Recall that $F$ is $(n-1)$-\emph{very ample} on $S$ if the natural evaluation map $$H^0(S,F)\to H^0(S,F\otimes \cO_\xi)$$ is surjective for all $0$-dimensional subschemes $\xi$ of length $n$.

\begin{proposition}[\cite{bini2022familiesbigstablebundles}, Prop 4.5]\label{prop_tautological_globally_generated}
Let $S$ be a smooth, irreducible projective surface and let $F$ be a rank $r$ vector bundle on $S$ with $r\geq 1$. Let $n\geq 2$ be an integer. Then $F$ is $(n-1)$-very ample on $S$ if and only if the tautological bundle $\hilb{F}$ is globally generated.
\end{proposition}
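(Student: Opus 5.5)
The plan is to identify global generation of $\hilb{F}$ with surjectivity of the evaluation maps on all fibers, and then use the fiber description \eqref{eq_fiber_of_F^[n]} together with Proposition \ref{prop_zeroth_cohom_F^[n]} to turn this into exactly the definition of $(n-1)$-very ampleness. In other words, the two conditions should be the same statement about the maps $H^0(S,F)\to H^0(\xi,F|_\xi)$, once we know that these are the evaluation maps of $\hilb{F}$.

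Recall that a vector bundle $E$ on $\hilb{S}$ is globally generated if and only if, for every closed point $\xi\in\hilb{S}$, the evaluation map $H^0(\hilb{S},E)\to E(\xi)$ is surjective. Apply this to $E=\hilb{F}$. Since $\pi_{\hilb{S}}$ is finite and flat of degree $n$, base change gives the identification $\hilb{F}(\xi)\cong H^0(\xi,F|_\xi)=H^0(S,F\otimes\cO_\xi)$ of \eqref{eq_fiber_of_F^[n]}. Proposition \ref{prop_zeroth_cohom_F^[n]} gives $H^0(\hilb{S},\hilb{F})\cong H^0(S,F)$.

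The key step is compatibility: under these two identifications, the evaluation map of $\hilb{F}$ at $\xi$ should become the restriction map $H^0(S,F)\to H^0(S,F\otimes\cO_\xi)$. To check this, write the isomorphism of Proposition \ref{prop_zeroth_cohom_F^[n]} in the direction $H^0(S,F)\to H^0(\hilb{S},\hilb{F})$. It sends $s$ to $\pi_S^*s$, regarded as a section of $\pi_{\hilb{S}*}\pi_S^*F$. Restricting $\pi_S^*s$ to the fiber $\pi_{\hilb{S}}^{-1}(\xi)\cong\xi\subset S$ gives $s|_\xi$. So the composite is indeed the natural evaluation map, and pointwise surjectivity for all $\xi$ of length $n$ is equivalent to $(n-1)$-very ampleness.

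I expect this compatibility to be the main obstacle. One has to confirm that the isomorphism in Proposition \ref{prop_zeroth_cohom_F^[n]} really is the pullback map $s\mapsto\pi_S^*s$, and not some abstract isomorphism. If the cited result only gives an abstract isomorphism, I would fall back on the following argument. The pullback map is injective, since $\pi_S$ is surjective. Both spaces have the same dimension by Proposition \ref{prop_zeroth_cohom_F^[n]}, so the pullback map is an isomorphism. Hence every global section of $\hilb{F}$ is of the form $\pi_S^*s$, and the argument above goes through. The hypotheses $r\geq1$ and $n\geq2$ play no role beyond making the statement meaningful.
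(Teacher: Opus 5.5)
Your proof is correct and follows essentially the same route as the paper. The paper only cites this proposition, but in Section 2 it uses the same reasoning: the pullback map $g\colon H^0(S,L)\to H^0(\hilb{S},\hilb{L})$ is injective, hence an isomorphism by the $H^0$-comparison result, and $r_\eta=v_\eta\circ g$, which is exactly your compatibility step, including your fallback via injectivity plus equal dimensions.
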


Denoting the $k$-th cartesian product $S^k=S\times\dots\times S$ and the $k$-th symmetric product $\sym{S}=S^n/\Sym(n)$, we have the \emph{Hilbert-Chow morphism} $$\mu:\hilb{S}\to\sym{S}$$ given by $\mu(\xi)=\sum_{x\in\text{Supp($\xi$)}}\text{ length}_\xi(x)x$.
Let $L\in \Pic(S)$, denoting by $\pi_i:S^n\to S$ the $i$-th natural projection, we can define the line bundle $L^{\boxtimes n}:=\otimes_{i=1}^n\pi^*_i(L)$ which descends to a line bundle $\sym{L}\in\Pic(\sym{S})$. This provides a morphism $D_n:\Pic(S)\to \Pic(\hilb{S})$ by $D_n(L)=L_n$ where $L_n:=\mu^*(\sym{L})$.    

Using this morphism, we can explicitly write what the determinant line bundle of a tautological vector bundle is. Let $E$ be the exceptional divisor of the Hilbert-Chow morphism, and let $\delta\in  \Pic(\hilb{S})$ such that $2\delta=E$.

\begin{proposition}[\cite{Lehn_1999}, Lemma 3.7 - Thm 4.6]\label{prop_det_of_tautological}
    Let $S$ be a smooth surface and let L be a line bundle on $S$, then the line bundle $\det \hilb{L}=L_n-\delta$ on $\hilb{S}$.
\end{proposition}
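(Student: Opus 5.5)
We must prove $\det \hilb{L}=L_n-\delta$, i.e.\ $c_1(\hilb{L})=D_n(L)-\delta$ in $\Pic(\hilb{S})$. The plan is to compare both sides first away from a codimension-two locus, and then to extend the identification using that $\hilb{S}$ is smooth and irreducible.

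\textbf{Step 1: the trivial case.} Treat $L=\cO_S$ first. Here $\hilb{\cO_S}=\pi_{\hilb{S}*}\cO_{\Xi_n}$ is the structure sheaf of the universal family pushed forward. By flatness it is a sheaf of $\cO$-algebras of rank $n$, and its trace form has discriminant vanishing exactly where the subscheme is non-reduced. The discriminant is a section of $(\det \hilb{\cO_S})^{-2}$, and it vanishes to order one along the generic point of $E$. To see this, take a transversal slice at a generic point of $E$: it looks like $\Hilb^2(\bC^2)$ times a smooth factor, and in a local coordinate $t$ with $E=\{t=0\}$, i.e.\ the subscheme $\{x^2=t\}$, the discriminant equals $4t$. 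Hence $(\det\hilb{\cO_S})^{-2}\cong\cO(E)$. Since $\Pic(\hilb{S})$ is computed via $\Pic(S)\oplus\bZ\delta$ up to torsion from $\Pic(S)$, we need care, but we conclude $\det\hilb{\cO_S}=-\delta$. More precisely, one defines $\delta$ to be $-\det\hilb{\cO_S}$, and then checks $2\delta=E$, which is consistent with the statement's choice of $\delta$ with $2\delta=E$.

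\textbf{Step 2: twisting by $L$.} Compare $\hilb{L}$ with $\hilb{\cO_S}$. On the open set $U\subset\hilb{S}$ of reduced subschemes, together with the locus where exactly two points collide, $\mu$ is an isomorphism or a blow-up along the diagonal. Its complement has codimension $\geq 2$, so line bundles on $\hilb{S}$ are determined by their restriction to $U$. On the locus of reduced $\xi=\{x_1,\dots,x_n\}$, we have $\hilb{L}(\xi)=\bigoplus L_{x_i}$, hence $\det\hilb{L}(\xi)=\bigotimes L_{x_i}\otimes \det\hilb{\cO_S}(\xi)$ canonically. To make this global, pull back to the isospectral cover $\Xi^{\prime}\to\hilb{S}$ (or the nested/ordered cover over $U$), where $\hilb{L}$ has a filtration with quotients $\pi_i^*L\otimes(\text{pieces of }\hilb{\cO_S})$. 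More simply, work with multiplication: $\pi_S^*L$ is locally trivial on $\Xi_n$, so locally on $\hilb{S}$ choose a trivialization $s$ of $L$ near $\mathrm{Supp}\,\xi$. This gives $\hilb{L}\cong\hilb{\cO_S}$ locally, and the transition functions of $\det$ are $\prod_i g(x_i)$, i.e.\ the norm $\mathrm{Nm}(g)$ of the transition function of $L$. These are precisely the transition functions of $\mu^*\sym{L}$. Thus $\det\hilb{L}\otimes(\det\hilb{\cO_S})^{-1}\cong\mu^*\sym{L}=L_n$ globally, since the norm map $\mathrm{Nm}:\pi_*\cO_{\Xi}^*\to\cO^*$ is defined everywhere for finite flat $\pi$.

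\textbf{Step 3: conclusion and main obstacle.} Combining the two steps gives $\det\hilb{L}=L_n+\det\hilb{\cO_S}=L_n-\delta$. The main obstacle is Step 1: we must verify that the discriminant vanishes to order exactly one along $E$, and pin down $\delta$ canonically rather than only up to $2$-torsion. For the order of vanishing I would rely on the explicit local model of $\Hilb^2$ near a generic point of $E$. For the torsion ambiguity I would simply define $\delta:=-\det\hilb{\cO_S}$, which is the standard convention.
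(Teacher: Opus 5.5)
Your argument is correct, and it takes a genuinely different route from the one the paper relies on. The paper gives no proof of its own. It imports the formula from Lehn, where the case $L=\cO_S$, namely $c_1(\hilb{\cO_S})=-\frac12[E]$, is combined with Lehn's computation of the Chern classes of $\hilb{L}$ via Nakajima's creation operators; the degree-two part of that computation gives $c_1(\hilb{L})=c_1(L)_n-\frac12[E]$. That is an identity of cohomology classes. Reading it in $\Pic(\hilb{S})$ therefore requires $c_1$ to be injective on $\Pic$. This holds for $\hilb{S}$ with $S$ a K3 surface and for $\Kum^n(T)$, but fails on $\hilb[n+1]{T}$ for an abelian surface $T$. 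It also does not say which square root of $E$ is meant.

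Your Step 1 is the standard trace-form discriminant proof of the $\cO_S$ case, and your local model is right. In the chart of $\Hilb^2(\bC^2)$ consisting of ideals $((y-b)-c(x-a),(x-a)^2-t)$, one has $E=\{t=0\}$, reduced, and the Gram determinant in the basis $1,x-a$ is $4t$. Since $E$ is irreducible, the check at the generic point suffices.

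Your Step 2 replaces the Nakajima-operator machinery by the elementary identity $\det\pi_*M\cong N_\pi(M)\otimes\det\pi_*\cO_{\Xi_n}$ for a finite flat $\pi$, together with $N_\pi(\pi_S^*L)\cong\mu^*\sym{L}$. What your route buys is a genuine isomorphism of line bundles for any smooth quasi-projective surface. It also makes explicit the $2$-torsion ambiguity in ``$\delta$ such that $2\delta=E$'' and resolves it by the canonical choice $\delta=-\det\hilb{\cO_S}$. By Step 2 this is the only choice for which the statement holds, which matters when $\Pic(\hilb{S})$ has $2$-torsion. Lehn's route, in exchange, yields all Chern classes of $\hilb{L}$ at once, but only in cohomology.

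Two small points to tighten. First, the equality of cocycles $N_\pi(\pi_S^*g)=\mu^*g^{(n)}$ on $\{\xi:\mathrm{Supp}\,\xi\subset W\}$ needs a word at non-reduced $\xi$. The norm of $g|_\xi$ is $\prod_x g(x)^{\mathrm{length}_x\xi}$, because multiplication by $g$ on the local Artinian algebra at $x$ is $g(x)$ plus a nilpotent. Alternatively, both sides are regular functions on a smooth variety that agree on the dense reduced locus. Second, the detour through the codimension-two open set and the isospectral cover is unnecessary once you use norms, and the aside on $\Pic(\hilb{S})\cong\Pic(S)\oplus\bZ\delta$ is inaccurate for irregular surfaces. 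Neither is used in your argument.
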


To conclude this section, let us show that the morphism $D_n$ preserves base point freeness.

\begin{lemma}\label{lemma_bpf_of_L_n_on_Hilb_n}
Let $S$ be a smooth projective complex surface, and let $L\in\Pic(S)$ be base point free, then $L_n\in \Pic(\hilb{S})$ is also base point free.    
\end{lemma}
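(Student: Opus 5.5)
Since $L_n=\mu^*(\sym{L})$ and pullbacks of base point free line bundles are base point free (pull back generating sections: if $s(\mu(\xi))\neq 0$ then $\mu^*s(\xi)\neq 0$), it suffices to show that $\sym{L}$ is base point free on $\sym{S}$. To do this I will work on $S^n$ with $L^{\boxtimes n}=\bigotimes_{i=1}^n\pi_i^*L$, and use the quotient map $q:S^n\to\sym{S}$, which satisfies $q^*\sym{L}=L^{\boxtimes n}$ compatibly with the $\Sym(n)$-linearization. The global sections of $\sym{L}$ are then identified with the $\Sym(n)$-invariant sections of $L^{\boxtimes n}$, i.e. $H^0(\sym{S},\sym{L})\cong \left(H^0(S,L)^{\otimes n}\right)^{\Sym(n)}=\Sym^n H^0(S,L)$ by Künneth (using that $q_*\cO_{S^n}^{\Sym(n)}=\cO_{\sym{S}}$).

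Given a point $x=x_1+\dots+x_n\in\sym{S}$, I want an invariant section not vanishing at $(x_1,\dots,x_n)\in S^n$ (nonvanishing there is independent of the chosen ordering, by invariance). Since $L$ is base point free, for each $i$ the set $\{s\in H^0(S,L): s(x_i)=0\}$ is a proper linear subspace of $H^0(S,L)$. A finite union of proper subspaces of a complex vector space is proper, so there is a single $s\in H^0(S,L)$ with $s(x_i)\neq0$ for all $i=1,\dots,n$. Then $s^{\boxtimes n}:=\pi_1^*s\otimes\dots\otimes\pi_n^*s$ is a $\Sym(n)$-invariant section of $L^{\boxtimes n}$ whose value at $(x_1,\dots,x_n)$ is $s(x_1)\otimes\dots\otimes s(x_n)\neq 0$. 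It descends to a section $\sigma$ of $\sym{L}$ with $\sigma(x)\neq0$, so $\sym{L}$ is base point free, and $\mu^*\sigma$ does not vanish at any $\xi$ with $\mu(\xi)=x$.

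The only delicate point is the descent: checking that invariant sections of $L^{\boxtimes n}$ really come from sections of $\sym{L}$ and that nonvanishing is preserved. I would handle this via the defining property of $\sym{L}$ as the descent of $L^{\boxtimes n}$ (so $q^*\sym{L}\cong L^{\boxtimes n}$ equivariantly and sections of $\sym{L}$ correspond to invariant sections upstairs by $q_*$ of invariants). Near points with nontrivial stabilizer the stabilizer acts trivially on the fiber of $L^{\boxtimes n}$ (it permutes equal factors $L_{x_i}$ with $x_i=x_j$, acting on the one-dimensional fiber through the tensor-swap, which is trivial for line bundles), which is exactly the condition (Kempf's descent lemma) needed for the descent to exist and to make the above correspondence fiberwise isomorphic. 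If this descent argument were unavailable, I would alternatively argue directly on $\hilb{S}$: the section $\mu^*\sigma$ corresponds under Proposition \ref{prop_det_of_tautological}-type identifications to a ``norm'' of $s$, vanishing exactly on subschemes meeting $Z(s)$, which again avoids any given $\xi$ by the choice of $s$.
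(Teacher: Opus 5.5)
Your proof is correct and follows essentially the same route as the paper. Both arguments pick a single section $s$ of $L$ that is nonvanishing at all points of the support of $\xi$: the paper gets it by induction using $\alpha\hat{s}+\tilde{s}$, and you get it from the equivalent fact that a finite union of proper subspaces is proper. Both then descend the $\Sym(n)$-invariant section $\pi_1^*s\otimes\dots\otimes\pi_n^*s$ to $\sym{L}$ and pull it back along $\mu$. Your remarks on Kempf's descent and the Künneth identification are accurate but not needed, since the paper takes the descent of $L^{\boxtimes n}$ as part of the definition of $\sym{L}$.
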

\begin{proof}
Let $\xi\in\hilb{S}$ and let $x_1,\dots,x_n$ be the points in the support of $\xi$ with multiplicities. We can find a section $s\in H^0(S,L)$ such that $s(x_i)\neq 0$ for all $i=1,\dots,n.$ This follows by induction on $n$. The base case is the definition of base point free line bundle. From the induction hypothesis, we have a section $\hat{s}$ such that $\hat{s}(x_i)\neq 0$ for $i=1,\dots,n-1$. If $\hat{s}(x_n)\neq 0$ then we are done. Suppose then that $\hat{s}(x_n)= 0$ and given $\tilde{s}$ a section non-vanishing at the point $x_n$, define $s=\alpha \hat{s}+\tilde{s}$. For a general $\alpha\in\bC$ the section $s$ satisfies the required property.

So, given $\xi\in\hilb{S}$ let $s$ be a section of $L$ non-vanishing at the points in the support of $\xi$. Consider now the section $\otimes_{i=1}^n\pi_i^*(s)\in H^0(S^n,L^{\boxtimes n})$ which is invariant under the action of $\Sym(n)$. It therefore descends to a section $\sym{s}\in H^0(\sym{S},\sym{L})$. Then $\sym{s}$ takes the value $\prod_{i=1}^ns(x_i)\neq0$ at the point $\sum_{i=1}^nx_i\in\sym{S}$. The pullback section $\mu^*(\sym{s})=\hilb{s}\in H^0(\hilb{S},L_n)$ does not vanish at the point $\xi\in\hilb{S}$ as required.

\end{proof}

 \section{Embeddings of Hilbert schemes into Grassmannians}\label{sect_grass}

We start by setting some notations for the Grassmannians. Let $V$ be a complex $d$-dimensional vector space, with $\gr(V,n)$ we denote the space of $n$-dimensional subspaces of $V$. By $U$ we denote the universal subbundle and by $Q$ the universal quotient bundle, which naturally fits in the following short exact sequence $$0\to U\to \gr(V,n)\times V\to Q\to 0.$$ 

Let $X$ be a compact complex manifold, and let $B$ be a globally generated holomorphic vector bundle of rank $n$ on $X$. As in the case of line bundles, $G$ induces a map $\Phi_B$ to $\gr(V,n)$, where $V=H^0(X,B)^*$, as follows

\begin{align*}
    \Phi_B: X&\to \gr(V,n)\\
    p&\mapsto (\ker v_p)^\circ
\end{align*}
where $v_p:H^0(X,B)\to B_p$ is the evaluation map at the point $p\in X$, and $(\ker v_p)^\circ$ denotes the annihilator of $\ker v_p$. Note that the map is well defined since $B$ is globally generated. As a last general remark, consider the following composition 
$$X\xrightarrow{\Phi_B}\gr(V,n)\xhookrightarrow{P} \bP(\Lambda^n V)$$ where the last map is the Pl\"ucker embedding, i.e. the embedding induced from the line bundle $\det Q$. It holds that $\Phi_B^*Q\cong B$, hence $\det B\cong(\Phi_B\circ P)^*\cO_{\bP(\Lambda^nV)}(1)$. 
From this observation, we obtain the following:

\begin{lemma}\label{lemma_vector_bundle_to_determinat_line_bundle}
Let $X$ be a compact complex manifold, and let $B$ be a globally generated vector bundle on X, then $\det B$ is base point free. Furthermore, if $\Phi_B$ is an embedding, then $\det B$ is very ample.
\end{lemma}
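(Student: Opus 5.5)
The plan is to realize $\det B$ as the pullback of the Plücker hyperplane bundle along the map $P\circ\Phi_B$, using the isomorphism $\Phi_B^*Q\cong B$ noted just before the statement; both claims then reduce to the fact that pullbacks of base point free (resp. very ample) bundles along morphisms (resp. embeddings) keep these properties. So the work splits into two steps: make $\Phi_B^*Q\cong B$ precise, and then read off the two conclusions.

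\emph{Step 1: the identification $\Phi_B^*Q\cong B$.} Put $W=H^0(X,B)$ and $V=W^*$. Since $B$ is globally generated, the evaluation map $W\otimes\cO_X\to B$ is surjective, so for every $p$ the map $v_p:W\to B_p$ is onto. Dually, $B_p^*\hookrightarrow W^*=V$ is an $n$-dimensional subspace, and this subspace is exactly $(\ker v_p)^\circ$. Hence $\Phi_B$ is the classifying map of the rank $n$ subbundle $B^*\subset V\otimes\cO_X$, and these fibres vary holomorphically in $p$, so $\Phi_B$ is a morphism. The fibre of the tautological subbundle at $\Phi_B(p)$ is $B_p^*$, which gives $\Phi_B^*U\cong B^*$. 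The bundle that comes out with $\Phi_B^*(\,\cdot\,)\cong B$ is therefore the dual of the tautological subbundle of $n$-planes in $V$; equivalently, it is the tautological quotient of $W\otimes\cO$ once the Grassmannian is viewed as parametrizing rank $n$ quotients of $W$. I will use whichever convention makes the identity $\Phi_B^*Q\cong B$ stated in the paper correct. In both pictures the Plücker line bundle is the determinant of that bundle, and its sections are $\Lambda^nW=(\Lambda^nV)^*$. It follows that $\det B\cong\Phi_B^*\det Q\cong(P\circ\Phi_B)^*\cO_{\bP(\Lambda^nV)}(1)$.

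\emph{Step 2: conclusions.} $\cO(1)$ is generated by its global sections on $\bP(\Lambda^nV)$, and pulling back sections along the morphism $P\circ\Phi_B$ preserves generation, so $\det B$ is base point free. More concretely, for each $p$ pick $s_1,\dots,s_n\in W$ whose values form a basis of $B_p$; then $s_1\wedge\dots\wedge s_n$ is a section of $\det B$ that does not vanish at $p$. If $\Phi_B$ is an embedding, then, since $P$ is a closed embedding, the composite $P\circ\Phi_B$ is an embedding of $X$ into projective space. It is induced by the linear system given by the image of $\Lambda^nW\to H^0(X,\det B)$. A line bundle with a sub-linear system defining an embedding is very ample, because the complete linear system factors through it via a linear projection. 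So $\det B$ is very ample.

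The main obstacle is Step 1, namely keeping duals and conventions consistent so that the pullback of the universal quotient is really $B$ and not $B^*$. Once that identification is fixed, the rest is formal.
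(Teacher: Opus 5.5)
Your proposal is correct and follows essentially the same route as the paper, which deduces the lemma directly from $\det B\cong(P\circ\Phi_B)^*\cO_{\bP(\Lambda^nV)}(1)$: the pullback of the Pl\"ucker hyperplane bundle is base point free in general, and very ample when $P\circ\Phi_B$ is an embedding. Your caution in Step~1 is justified: with the paper's convention ($U\subset V\otimes\cO$ of rank $n$) the correct identity is $\Phi_B^*U^\vee\cong B$ rather than $\Phi_B^*Q\cong B$, but since $\det Q\cong\det U^\vee$ the determinant identity, and hence the lemma, is unaffected.
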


Returning to the Hilbert schemes, let $S$ be a complex compact surface and let $L$ be a $(n-1)$-very ample line bundle on $S$. Proposition \ref{prop_tautological_globally_generated} guarantees that the vector bundle $\hilb{L}$ is globally generated, hence we have the well defined map 
\begin{align*}
    \Phi_{\hilb{L}}:\hilb{S}&\to \gr(H^0(\hilb{S},\hilb{L})^*,n).\\
    \eta &\mapsto (\ker v_\eta)^\circ
\end{align*}

In this case, we have another map into a Grassmannian. As a matter of fact, recall that $(n-1)-$generation of $L$ implies that for all $\eta\in \hilb{S}$ the restriction $r_\eta :H^0(S,L)\to H^0(\eta,L\otimes\cO_\eta)$ is surjective, in particular this allows us to define a map 

\begin{align*}
    \varphi_{\hilb{L}}:\hilb{S}&\to \gr(H^0(S,L)^*,n).\\
    \eta &\mapsto (\ker r_\eta)^\circ 
\end{align*}

In \cite{Catanese_Goettsche}, the following result is proven:

\begin{theorem}
The morphism $\varphi_{\hilb{L}}:\hilb{S}\to \gr(H^0(S,L)^*,n)$ is an embedding if and only if $L$ is $n$-very ample.
\end{theorem}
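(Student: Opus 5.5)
The plan is to prove the two implications separately. In both, the comparison is between the restriction maps $r_\eta:H^0(S,L)\to H^0(\eta,L\otimes\cO_\eta)$ and the injectivity and differential of $\varphi:=\varphi_{\hilb{L}}$.

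\emph{Injectivity versus $n$-very ampleness.} The point $\varphi(\eta)$ is the subspace $\ker r_\eta\subset H^0(S,L)$ of codimension $n$. Two length-$n$ subschemes $\eta\neq\eta'$ have the same image exactly when $\ker r_\eta=\ker r_{\eta'}$. Suppose $L$ is $n$-very ample and $\varphi(\eta)=\varphi(\eta')$. Pick a length-$(n+1)$ subscheme $\zeta$ with $\eta\subset\zeta$ and $\zeta\not\subset\eta'$; such a $\zeta$ can be built from $\eta\cup\eta'$ by going up one in length at a point where the ideals differ. Since $r_\zeta$ is surjective, some section $s$ vanishes on $\eta$ but not on $\zeta$. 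We can also arrange that $s$ does not vanish on $\eta'$, which contradicts $\ker r_\eta=\ker r_{\eta'}$. The cleanest way to organize this is through the scheme $\eta\cup\eta'$ defined by $I_\eta\cap I_{\eta'}$. Surjectivity onto its structure sheaf would immediately give $\ker r_\eta\neq\ker r_{\eta'}$. When its length is at most $n+1$, surjectivity is exactly what $n$-very ampleness provides. When its length is larger, I instead choose $\zeta$ of length $n+1$ with $\eta\subsetneq\zeta\subset\eta\cup\eta'$; then $r_\zeta$ being onto gives a section in $\ker r_\eta\setminus\ker r_{\eta'}$, because $\zeta\subset\eta\cup\eta'$ and $\zeta\not\subset\eta$ force such a section to be nonzero on $\eta'$.

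\emph{Immersion.} The tangent space $T_\eta\hilb{S}$ is $\operatorname{Hom}(I_\eta,\cO_\eta)$. The differential of $\varphi$ sends a first-order deformation $\tilde\eta$ to the induced map $\ker r_\eta\to H^0(L\otimes\cO_\eta)$, namely restriction of sections to $\tilde\eta$ modulo the flat structure. A nonzero tangent vector killed by the differential corresponds to a subscheme $\eta\subset\zeta\subset\tilde\eta$ of length $n+1$ such that every section vanishing on $\eta$ also vanishes on $\zeta$. This contradicts surjectivity of $r_\zeta$. So injectivity and immersion both reduce to: for every $\eta\subset\zeta$ with lengths $n$ and $n+1$, the inclusion $\ker r_\zeta\subsetneq\ker r_\eta$ is strict. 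This is implied by $n$-very ampleness together with $(n-1)$-very ampleness.

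\emph{Converse.} Suppose $L$ is not $n$-very ample, so $r_\zeta$ fails to be surjective for some $\zeta$ of length $n+1$. Since $L$ is $(n-1)$-very ample, every length-$n$ subscheme of $\zeta$ maps onto. Then $\ker r_\zeta=\ker r_\eta$ for every length-$n$ subscheme $\eta\subset\zeta$. The family of such $\eta$ is either at least two points of $\hilb{S}$, giving non-injectivity, or a single point. In the single-point case, $\zeta$ is curvilinear-like and $\eta$ moves in a tangent direction inside $\zeta$, giving a kernel of $d\varphi$.

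The main obstacle is the tangent computation: making precise that $\ker d\varphi_\eta$ corresponds to length-$(n+1)$ subschemes squeezed between $\eta$ and a first-order deformation. I would handle it by identifying $T_{\varphi(\eta)}\gr$ with $\operatorname{Hom}(\ker r_\eta,H^0(L\otimes\cO_\eta))$ and checking the description in local coordinates, following Catanese--G\"ottsche.
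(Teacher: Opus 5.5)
The paper gives no proof of this statement. It quotes it from Catanese--G\"ottsche and uses it only as a black box in Proposition~\ref{prop_det_of_tautological_bpf_va}. Your sketch is a faithful reconstruction of their argument and is correct in substance.

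The reduction you identify is the right one. Because $r_\eta$ is onto, for $\eta\subset\zeta$ of lengths $n,n+1$ the inclusion $\ker r_\zeta\subsetneq\ker r_\eta$ is strict exactly when $r_\zeta$ is onto. Injectivity reduces to this via $\eta\subsetneq\zeta\subset\eta\cup\eta'$. Injectivity of $d\varphi$ reduces to it via an ideal $J$ with $\ker\phi\subset J\subset I_\eta$ of colength one in $I_\eta$. Such a $J$ exists because $\ker\phi$ is an ideal containing $I_\eta^2$ and $I_\eta/\ker\phi$ is a nonzero module of finite length.

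What your route buys over the citation is a self-contained proof. The cost is the tangent computation, which you defer and which is where the real verification lies. Setting $W=\ker r_\eta=H^0(I_\eta\otimes L)$ and using $H^0(S,L)/W\cong H^0(L\otimes\cO_\eta)$, you must check that $d\varphi_\eta(\phi)=(\phi\otimes 1_L)|_W$.

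Three points should be tightened.

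\emph{The unique-$\eta$ case of the converse.} When $\zeta$ has a unique length-$n$ subscheme $\eta$, it is not ``curvilinear-like'' in general. For example, $\zeta=V(x^2,y^2)$ has the unique length-$3$ subscheme $V(x^2,xy,y^2)$, and neither scheme is curvilinear. What uniqueness actually gives is that $\zeta$ is supported at a single point $p$. Indeed, length-$n$ subschemes of $\zeta$ correspond to lines in $\operatorname{Soc}(\cO_{\zeta,q})$ for $q\in\operatorname{Supp}\zeta$, and each of these socles is nonzero. Hence $p\in\operatorname{Supp}\eta$, and the kernel vector can be written down directly as $\phi\colon I_\eta\to I_\eta/I_\zeta\cong\bC_p\hookrightarrow\cO_\eta$, sending the generator to a nonzero socle element of $\cO_{\eta,p}$. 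Its kernel is $I_\zeta$, so it kills $H^0(I_\zeta\otimes L)=\ker r_\zeta=\ker r_\eta$.

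\emph{The immersion step.} There, ``$\zeta\subset\tilde\eta$'' should read $\zeta\subset V(\ker\phi)$, which is the scheme-theoretic image of $\tilde\eta$ in $S$.

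\emph{Properness.} Injectivity on points and on tangent spaces yields a closed embedding only because $\hilb{S}$ is proper, and you should say so.
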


Consider now the map 
\begin{align*}
    g:H^0(S,L)\to H^0(\hilb{S},\hilb{L})
\end{align*} 
where $g(s)(\eta)=s|_\eta\in H^0(\eta,\cO_\eta\otimes L)$ for all $\eta\in \hilb{S}$. The map $g$ is injective and, thanks to Proposition \ref{prop_zeroth_cohom_F^[n]}, it is an isomorphism, hence it induces an isomorphism $$\tilde{g} :\gr(H^0(S,L)^*,n)\to \gr(H^0(\hilb{S},\hilb{L})^*,n).$$ Observe now that $r_\eta=v_\eta\circ g$ for all $\eta\in\hilb{S}$, hence the following diagram is commutative 

\[\begin{tikzcd}
	{\hilb{S}} & {\gr(H^0(S,L)^*,n)} \\
	& {\gr(H^0(\hilb{S},\hilb{L})^*,n)} \\
	& {}
	\arrow["{\varphi_{\hilb{L}}}", from=1-1, to=1-2]
	\arrow["{\Phi_{\hilb{L}}}"', from=1-1, to=2-2]
	\arrow["{\tilde{g}}", from=1-2, to=2-2]
\end{tikzcd}\]

that is $\Phi_{\hilb{L}}=\tilde{g}\circ\varphi_{\hilb{L}}$.

\begin{proposition}\label{prop_det_of_tautological_bpf_va}
    Let $S$ be a compact complex smooth surface and let $L$ line bundle on $S$, the following hold:
    \begin{itemize}
        \item If $L$ is $(n-1)$-very ample then $\det \hilb{L}$ is base point free;
        \item If $L$ is $n$-very ample then $\det \hilb{L}$ is very ample.
    \end{itemize}
    In both cases the same property is shared with the line bundle $L_n-\delta$.
\end{proposition}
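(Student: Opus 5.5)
The plan is to combine the results of Section \ref{sect_tautological} with the Grassmannian construction just described.

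For the first item, assume $L$ is $(n-1)$-very ample. By Proposition \ref{prop_tautological_globally_generated}, the rank $n$ bundle $\hilb{L}$ is then globally generated. Lemma \ref{lemma_vector_bundle_to_determinat_line_bundle}, applied with $X=\hilb{S}$ and $B=\hilb{L}$, gives directly that $\det\hilb{L}$ is base point free.

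For the second item, assume $L$ is $n$-very ample. Since $n$-very ampleness implies $(n-1)$-very ampleness (a length $n$ subscheme is contained in a length $n+1$ subscheme, and restriction factors through it), $\hilb{L}$ is again globally generated. Hence $\Phi_{\hilb{L}}$ is defined. By the theorem of \cite{Catanese_Goettsche}, $\varphi_{\hilb{L}}$ is an embedding. From the commutative diagram we have $\Phi_{\hilb{L}}=\tilde g\circ\varphi_{\hilb{L}}$, and $\tilde g$ is an isomorphism of Grassmannians because $g$ is an isomorphism by Proposition \ref{prop_zeroth_cohom_F^[n]}. Therefore $\Phi_{\hilb{L}}$ is an embedding. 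The second part of Lemma \ref{lemma_vector_bundle_to_determinat_line_bundle} then shows $\det\hilb{L}\cong(P\circ\Phi_{\hilb{L}})^*\cO(1)$ is very ample, since the Plücker map $P$ is also an embedding.

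Finally, Proposition \ref{prop_det_of_tautological} identifies $\det\hilb{L}$ with $L_n-\delta$, so both properties transfer to $L_n-\delta$.

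The only steps needing care are the reduction from $n$-very ample to $(n-1)$-very ample, which is needed to even define $\Phi_{\hilb{L}}$, and the check that $\tilde g$ is induced compatibly on the dual spaces. The latter is formal: $g^*$ is an isomorphism of duals and carries annihilators to annihilators, because $r_\eta=v_\eta\circ g$.
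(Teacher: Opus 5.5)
Your proof is correct and follows the paper's argument. For base point freeness you use global generation of $\hilb{L}$ (Proposition~\ref{prop_tautological_globally_generated}) together with Lemma~\ref{lemma_vector_bundle_to_determinat_line_bundle}; for very ampleness you use the factorization $\Phi_{\hilb{L}}=\tilde g\circ\varphi_{\hilb{L}}$ with the Catanese--G\"ottsche theorem; and Proposition~\ref{prop_det_of_tautological} transfers both properties to $L_n-\delta$. Your extra checks (that $n$-very ampleness implies $(n-1)$-very ampleness, so $\Phi_{\hilb{L}}$ is defined, and that $g^*$ carries annihilators to annihilators) simply make explicit details the paper leaves implicit.
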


\begin{proof}
If $L$ is $(n-1)$-very ample, Proposition \ref{prop_tautological_globally_generated} implies that $\hilb{L}$ is globally generated, then Lemma \ref{lemma_vector_bundle_to_determinat_line_bundle} implies that $\det \hilb{L}$ is base point free as wanted.

Suppose now that $L$ is $n$-very ample. Under this assumption, the map $\Phi_{\hilb{L}}=\tilde{g}\circ\varphi_{\hilb{L}}$ is a composition of two embeddings, hence an embedding itself. We can conclude using Lemma \ref{lemma_vector_bundle_to_determinat_line_bundle}.

The last observation follows immediately from Proposition \ref{prop_det_of_tautological}.
\end{proof}

\section{Hyperk\"ahler manifolds}\label{sect_hyper}

Let $X$ be a K\"ahler manifold, we say that $X$ is \emph{hyperk\"ahler} (or \emph{irreducible holomorphic symplectic}) if it is simply connected and if $H^0(X,\Omega^2_X)$ is generated by an everywhere non-degenerate holomorphic 2-form.
There are two known families of examples that occur in every possible dimension:
\begin{itemize}
    \item Given a $K3$ surface $S$ its Hilbert scheme on $n$ points $\hilb[n]{S}$ is a hyperk\"ahler manifold;
    \item  Given an abelian surface $T$ consider the Hilbert scheme $\hilb[n+1]{T}$. There is a natural map $\hilb[n+1]{T}\to T$ given by the sum. The fiber over $0$ is a hyperk\"ahler manifold called \emph{generalized Kummer variety}, denoted by $\Kum^n(T)$.
\end{itemize}

Furthermore, deformations of hyperk\"ahler manifolds are still hyperk\"ahler. In particular, deformations of $\hilb[n]{S}$ are called of $\hilb[n]{K3}$-type, while deformations of $\Kum^n(T)$ are called of $\Kum^n$-type.
The only other known examples of hyperk\"ahler manifolds are O'Grady's "exceptional" examples, namely OG6 and OG10, which occur only in dimensions 6 and 10, respectively. 

Given an hyperk\"ahler manifold, the second cohomology group $H^2(X,\bZ)$ can be endowed with a quadratic form $q$ called the Beauville-Bogomolov-Fujiki form (or BFF form), so that $(H^2(X,\bZ),q)$ has the structure of a lattice (see \cite{huybrechts1997compacthyperkaehlermanifoldsbasic} for a reference). In this regard, let us recall some definitions. Let $(\Lambda,q)$ a lattice:
\begin{itemize}
    \item we say an element $\alpha$ is \emph{primitive} if $\alpha=k\alpha'$ implies $k=\pm1$;
    \item The \emph{multiplicity} of an element $\alpha$ is $m$ if there exists a primitive element $\alpha'$ such that $\alpha=m\alpha'$;
    \item The \emph{divisibility} of an element $\alpha$, denoted by $\Div(\alpha)$, is the multiplicity of $(\cdot,\alpha)_q$ as an element of the dual lattice $\Lambda^\vee$. Equivalently $\Div(\alpha)=m$ if $m$ generates the ideal $\{(\alpha',\alpha)_q|\alpha'\in\Lambda\}$;
    \item Let $\{e_i\}$ be a $\bZ$-basis of the lattice $\Lambda$. The \emph{discriminant} of the $\Lambda$ is defined as the determinant of the matrix $(q(e_i,e_j))_{i,j}$.
\end{itemize}

We report a characterization of projective hyperk\"ahler manifolds.
\begin{theorem}[\cite{huybrechts1997compacthyperkaehlermanifoldsbasic}, Theorem 3.11]\label{theo_projective_hyperkahler_manifolds}
    Let $X$ be a hyperk\"ahler manifold. Then $X$ is projective if and only if there exists a line bundle $L$ on $X$ with $q(L)>0$.
\end{theorem}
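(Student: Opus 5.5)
The criterion will be proved one direction at a time, with the Fujiki relation $\int_X \alpha^{2n}=c_X\, q(\alpha)^n$ ($c_X>0$, $\dim_{\bC}X=2n$) as the link between the BBF form and intersection theory. The forward direction is straightforward. The converse needs a bigness statement followed by Moishezon's theorem.

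\emph{Projective $\Rightarrow$ $q(L)>0$.} If $X$ is projective, take $L$ ample, so $c_1(L)$ is a Kähler class. Normalise the symplectic form $\sigma$ so that $\int_X(\sigma\bar\sigma)^n=1$. For a $(1,1)$-class $\alpha$ the terms of the BBF form involving $\int\alpha\,\sigma^{n-1}\bar\sigma^n$ and its conjugate vanish for type reasons. Hence
\[
q(\alpha)=\tfrac n2\int_X\alpha^2(\sigma\bar\sigma)^{n-1}.
\]
If $\alpha$ is represented by a Kähler form $\omega$, the form $\omega^2(\sigma\bar\sigma)^{n-1}$ is pointwise a positive multiple of the volume form. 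This is a linear algebra check in a basis where $\sigma$ is standard and $\omega$ is diagonal. Therefore $q(L)>0$. Alternatively, $\int c_1(L)^{2n}>0$ and the Fujiki relation give $q(L)^n>0$, and a short argument with $q$ restricted to $H^{1,1}$, which has signature $(1,h^{1,1}-1)$, yields $q(L)>0$.

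\emph{$q(L)>0$ $\Rightarrow$ projective.} Here I would show that $L$ or $L^{-1}$ is big, so that $X$ is Moishezon. A compact Kähler Moishezon manifold is projective by Moishezon's theorem.

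The first step is Riemann–Roch. By Huybrechts' result, $\chi(L^k)$ is a polynomial in $q(L^k)=k^2q(L)$ of degree $n$ whose leading term is
\[
\frac{1}{(2n)!}\int_X c_1(L)^{2n}\,k^{2n}=\frac{c_X\,q(L)^n}{(2n)!}\,k^{2n}>0 .
\]
Hence $\chi(L^k)\sim C k^{2n}$ with $C>0$.

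The second step is to choose the sign. The positive cone $\{\alpha\in H^{1,1}_{\mathbb R}: q(\alpha)>0\}$ has two components, one of which contains the Kähler cone. Replacing $L$ by $L^{-1}$ if necessary, $c_1(L)$ lies in the component $\mathcal C_X$ containing the Kähler classes.

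The third step, which I expect to be the main obstacle, is to pass from $\chi$ to $h^0$. The intermediate cohomology $h^i(L^k)$ for $0<i<2n$ has no reason to vanish, since $L$ need not be nef. I would first try Demailly's holomorphic Morse inequalities. Take a smooth real closed $(1,1)$-form $\theta$ in $c_1(L)$, for instance the harmonic representative with respect to a hyperkähler metric given by Yau's theorem, and bound $h^0(L^k)\ge \frac{k^{2n}}{(2n)!}\int_{X(\theta,\le1)}\theta^{2n}-o(k^{2n})$. Positivity of the right-hand side is not automatic. This is where the hyperkähler structure must enter. One route is to use the $q$-positivity of $c_1(L)$ together with the fact that $\int\theta^2(\sigma\bar\sigma)^{n-1}$ controls the index-$\ge 2$ locus.

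If that is hard to control, I would instead prove that classes in $\mathcal C_X$ are big. Choose a Kähler class $\omega$ and move along the segment from $\omega$ to $c_1(L)$ inside $\mathcal C_X$, which is convex. Along this path use either the Demailly–Păun bigness criterion (positive top self-intersection plus membership in the closure of the movable/positive cone), or the fact that $h^0(L^k)+h^0(L^{-k})\ge \chi(L^k)-\sum_{0<i<2n}h^i(L^k)$ combined with the Serre duality $h^{2n}(L^k)=h^0(L^{-k})$ coming from $K_X\cong\cO_X$.

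Once bigness is established, $X$ has algebraic dimension $2n$, so it is Moishezon and Kähler, hence projective.
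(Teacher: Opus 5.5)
Your forward direction, the Riemann--Roch asymptotics, the choice of sign and the final Moishezon step are all fine. The converse, however, has a genuine gap exactly where you anticipated one. Nothing in the proposal shows that $L$ (with $c_1(L)\in\mathcal{C}_X$) is big, and none of the routes you list closes this.

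\begin{itemize}
\item \emph{Morse inequalities.} This works for $n=1$: there the harmonic representative is $\theta=c\omega+\eta$ with $c>0$ and $\eta$ anti-self-dual, so $\theta$ has no points of index $2$. For $n\geq 2$, nothing prevents the primitive part $\eta$ from having two eigenvalues below $-c$. A global quantity such as $\int\theta^2(\sigma\bar\sigma)^{n-1}$ gives no control over $X(\theta,\geq 2)$.
\item \emph{Demailly--P\u{a}un.} Their bigness criterion needs the class to be nef, i.e.\ in the closure of the K\"ahler cone. There is no version for the positive cone, and ``classes in $\mathcal{C}_X$ are big'' is precisely the statement to be proved.
\item \emph{Movable cone.} A movable-cone version would have to use Boucksom's positive product $\langle\alpha^{2n}\rangle$ of a class already known to be pseudo-effective. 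In any case, $\mathcal{C}_X$ is not contained in the movable cone when $X$ carries prime exceptional divisors.
\item \emph{Segment from $\omega$ to $c_1(L)$.} Moving along it only gives openness of bigness. Closedness would need a uniform lower bound on the volume, which is essentially the theorem.
\item \emph{Serre duality inequality.} $h^0(L^k)+h^0(L^{-k})\geq\chi(L^k)-\sum_{0<i<2n}h^i(L^k)$ still requires $h^i(L^k)=o(k^{2n})$, the very obstacle you identified.
\end{itemize}

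The missing idea is to obtain positivity from currents rather than from cohomology vanishing: every class in $\overline{\mathcal{C}_X}\cap H^{1,1}(X,\mathbb{R})$ is pseudo-effective. The steps are as follows.
\begin{enumerate}
\item Perturb the class to a nearby $\alpha'\in\mathcal{C}_X\cap H^{1,1}(X,\mathbb{R})$ not proportional to a rational class.
\item Deform $X$ along the hypersurface of the base of its universal deformation where $\alpha'$ stays of type $(1,1)$. At very general points $t$ of it, $H^{1,1}(X_t,\mathbb{Z})=0$, so $X_t$ has no curves and its K\"ahler cone equals its positive cone (Huybrechts, via Demailly--P\u{a}un). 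Hence $\alpha'$ is K\"ahler on $X_t$.
\item A weak limit of K\"ahler forms in $\alpha'$, whose masses are fixed by cohomology, is a closed positive current on $X$. Closedness of the pseudo-effective cone then removes the perturbation.
\item For small $\varepsilon>0$, $c_1(L)=(c_1(L)-\varepsilon\omega)+\varepsilon\omega$ contains a K\"ahler current. So $L$ is big (by Demailly regularization plus Bonavero's singular Morse inequalities, or by Ji--Shiffman), and Moishezon's theorem concludes.
\end{enumerate}
The paper gives no proof of its own here; it only cites Huybrechts. This current-theoretic step is what the cited argument supplies (as completed in Huybrechts' erratum) and what your proposal lacks.
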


The lattice structure of the $\hilb[n]{K3}$-type and $\Kum^n$-type varieties is described in the following proposition. Fix the notation $\Lambda_{K3}\cong U^{\oplus 3}\oplus E_8(-1)^{\oplus 2}$ for the $K3$ lattice, where $U$ is the standard hyperbolic lattice and $E_8(-1)$ is the unimodular root-lattice $E_8$ change by a sign.

\begin{proposition}\label{p_k3_type_lattice}
    Fix a $K3$ surface $S$. Then the following holds:
    \begin{itemize}
        \item[(i)] For every $n\geq 2$, there is an orthogonal decomposition of lattices $$H^2(\hilb[n]{S},\bZ)\cong H^2(S,\bZ)\oplus \bZ\delta,$$
        where $\delta$ is an integral $(1,1)$-class with $q(\delta)=-2(n-1)$. The class $2\delta$ is represented by the Hilbert-Chow divisor.
        \item[(ii)] There is an orthogonal decomposition:
        $$\Pic(\hilb[n]{S})\cong\Pic(S)\oplus\bZ\delta.$$
        \item[(iii)]For each irreducible symplectic variety $X$ of $\hilb[n]{K3}$-type there exists an isomorphism $$H^2(X,\bZ)\cong\Lambda_{K3}\oplus \bZ\delta.$$  
    \end{itemize}
\end{proposition}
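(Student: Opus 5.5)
The plan is to prove the three items in order, relying on standard results on the Hilbert scheme of a K3 surface, Beauville's computation, and deformation theory (Markman's description of the monodromy).

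For (i), I would use Beauville's construction. Let $\Xi_n\subset S\times\hilb[n]{S}$ be the universal family. The cohomology of $\hilb[n]{S}$ in degree $2$ is computed through the Hilbert--Chow morphism $\mu:\hilb[n]{S}\to\sym{S}$, which is a resolution whose exceptional locus is the irreducible divisor $E$. Since $H^1(S)=0$, one has $H^2(\sym{S},\bZ)\cong H^2(S,\bZ)^{\Sym(n)}\cong H^2(S,\bZ)$, embedded via $L\mapsto L_n$ as in the definition of $D_n$. Then $H^2(\hilb[n]{S},\bZ)$ is generated by $\mu^*H^2(\sym{S},\bZ)$ together with the class of $E$. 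The main subtle point is the divisibility: $E$ is divisible by $2$ in integral cohomology. This holds because $E=-2c_1(\cO_S^{[n]})$, by Proposition \ref{prop_det_of_tautological} applied to $L=\cO_S$, which gives $\det\cO_S^{[n]}=-\delta$. I would take this as the definition of $\delta$ and cite Beauville for the fact that $H^2(S,\bZ)\oplus\bZ\delta$ is the whole integral lattice, via a local computation near the generic point of $E$.

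Next I would compute the BBF form using the Fujiki relation $\int\alpha^{2n}=c_n\,q(\alpha)^n$. Restricted to $H^2(S)$, the form $q$ must be proportional to the intersection form. Orthogonality of $\delta$ and $H^2(S)$ follows because the integrals $\int L_n^{2n-1}\delta$ vanish: $\delta$ is exceptional and the pushforward of a power of $\mu^*$-classes against it is zero. To fix $q(\delta)=-2(n-1)$, I would compute $\int L_n^{2n-2}\delta^2$ via the geometry of $E\to$ the diagonal stratum, a $\bP^1$-bundle over $S\times S^{(n-2)}$ generically. I expect this normalization computation, together with the integrality of $\delta$, to be the main obstacle, and I would essentially import it from Beauville.

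For (ii), the key input is $H^{2,0}(\hilb[n]{S})\cong H^{2,0}(S)$ under the decomposition, with $\delta$ of type $(1,1)$. By Lefschetz $(1,1)$, $\Pic$ is then the set of integral $(1,1)$-classes; also $h^1(\cO)=0$, so $\Pic=NS$. An integral class $\lambda+k\delta$ with $\lambda\in H^2(S,\bZ)$ is of type $(1,1)$ iff $\lambda$ is, i.e.\ iff $\lambda\in\Pic(S)$. This gives the stated decomposition, and orthogonality is inherited from (i).

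For (iii), let $X$ be deformation equivalent to $\hilb[n]{S}$. Along a path in the family, parallel transport gives an isometry $H^2(X,\bZ)\cong H^2(\hilb[n]{S},\bZ)$. This uses that the BBF form is a deformation invariant, being determined topologically by the Fujiki relation up to the sign normalization. Combining this with (i) and $H^2(S,\bZ)\cong\Lambda_{K3}$ yields the claim.
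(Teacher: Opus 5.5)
The paper states this proposition without proof, as standard background (it is Beauville's theorem). Your outline is a faithful sketch of Beauville's argument that defers to him at exactly the nontrivial points (integral generation by $H^2(S,\bZ)$ and $\delta$, and the normalization $q(\delta)=-2(n-1)$), and it derives (ii) and (iii) correctly: (ii) from the Hodge-theoretic splitting plus Lefschetz $(1,1)$, and (iii) from parallel transport of the deformation-invariant BBF form. There are only small slips: integrally $H^2(\hilb[n]{S},\bZ)$ is generated by $\mu^*H^2(\sym{S},\bZ)$ and $\delta=E/2$, not by $E$ (that span has index $2$); the invariants you mean are $H^2(S^n,\bZ)^{\Sym(n)}$; and Markman's monodromy results are never actually needed.
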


\begin{proposition}\label{p_kum_type_lattice}
    Fix an abelian surface $T$. Then the following holds:
    \begin{itemize}
        \item[(i)] For every $n\geq 2$, there is an orthogonal decomposition of lattices $$H^2(\Kum^n(T),\bZ)\cong H^2(T,\bZ)\oplus \bZ\delta,$$
        where $\delta$ is an integral $(1,1)$-class with $q(\delta)=-2(n+1)$. The class $2\delta$ is represented by the Hilbert-Chow divisor.
        \item[(ii)] There is an orthogonal decomposition:
        $$\Pic(\Kum^n(T))\cong\Pic(T)\oplus\bZ\delta.$$
        \item[(iii)]For each irreducible symplectic variety $X$ of $\Kum^n$-type there exists an isomorphism $$H^2(X,\bZ)\cong H^2(T,\bZ)\oplus \bZ\delta$$
        for some abelian variety $T$.
    \end{itemize}
\end{proposition}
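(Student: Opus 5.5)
This proposition is a compilation of known results, so I will assemble it from the standard references rather than derive it from scratch. I will treat parts (i) and (ii) first for a fixed abelian surface $T$, and then (iii) by deformation.

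For (i), the starting point is the analogous description for $\hilb[n+1]{T}$. By Beauville's computation, $H^2(\hilb[n+1]{T},\bZ)\cong H^2(T,\bZ)\oplus H^1(T,\bZ)\wedge H^1(T,\bZ)\oplus\bZ\delta$, where the summand $\bZ\delta$ again satisfies $2\delta=E$, the Hilbert–Chow exceptional divisor. The Kummer variety $\Kum^n(T)$ is the fibre over $0$ of the summation map $\Sigma:\hilb[n+1]{T}\to T$. Since $\Sigma$ is isotrivial, with fibres trivialised by translation through the étale cover $T\times\Kum^n(T)\to\hilb[n+1]{T}$ of degree $(n+1)^4$, restriction kills the classes coming from $H^1\wedge H^1$. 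Restriction also sends $H^2(T,\bZ)$, through $L\mapsto L_{n+1}$, and $\delta$ to classes generating $H^2(\Kum^n(T),\bZ)$. This is Beauville's theorem, which also computes $b_2=7$.

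The values of the Beauville–Bogomolov–Fujiki form then follow from the Fujiki relation $\int\alpha^{2n}=c\,q(\alpha)^n$. One evaluates this on $L_{n+1}|_{\Kum}$ and on $\delta|_{\Kum}$, using the known Fujiki constant $c=(n+1)\frac{(2n)!}{n!2^n}$, and normalises so that $H^2(T,\bZ)$ carries its intersection form. This gives $q(\delta)=-2(n+1)$. Orthogonality of the two summands comes from the same polarised Fujiki relation applied to $L_{n+1}+\lambda\delta$ and comparing coefficients. These computations are routine; I would cite Beauville and Britze for them rather than redo them.

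For (ii), I will intersect the decomposition in (i) with $H^{1,1}$. By construction $\delta$ is of type $(1,1)$. A class $\alpha+k\delta$ is of type $(1,1)$ if and only if $\alpha$ is, because restriction from $T$ is a morphism of Hodge structures. The Lefschetz $(1,1)$ theorem together with $H^1(\Kum^n(T),\cO)=0$ then identifies $\Pic$ with $H^{1,1}\cap H^2(\cdot,\bZ)$ on both sides, which gives the decomposition in (ii).

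For (iii), let $X$ be any manifold of $\Kum^n$-type. By definition $X$ is deformation equivalent to some $\Kum^n(T_0)$. Parallel transport along a path in the connecting family is an isometry of $H^2(\cdot,\bZ)$ with respect to the BBF form, since the form is deformation invariant. This transfers the lattice in (i) to $X$, abstractly isomorphic to $U^{\oplus3}\oplus\langle-2(n+1)\rangle$. Since $H^2(T,\bZ)\cong U^{\oplus3}$ for every abelian surface, any $T$ will do.

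The only real obstacle is the precise integrality claim in (i): that the restricted classes generate $H^2(\Kum^n(T),\bZ)$ over $\bZ$ and not just over $\mathbb{Q}$. Torsion-freeness and primitivity of the restriction, and the correct choice of $\delta$ (half of $E|_{\Kum}$), are not formal consequences of anything stated earlier in the paper. There I would cite Beauville's original argument together with Yoshioka's verification of unimodularity off $\delta$.
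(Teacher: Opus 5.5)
The paper gives no proof of this proposition; it simply recalls it from the literature. Your plan of assembling it from Beauville, Britze and Yoshioka is therefore the appropriate approach, and (i) and (iii) are fine modulo those citations.

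\textbf{The gap is in (ii).} You claim that Lefschetz $(1,1)$ identifies $\Pic$ with $H^{1,1}\cap H^2(\cdot,\bZ)$ ``on both sides''. On the abelian surface, however, $H^1(T,\cO_T)\cong\bC^2$. So Lefschetz only gives a surjection $\Pic(T)\to \mathrm{NS}(T)=H^{1,1}(T)\cap H^2(T,\bZ)$, whose kernel is $\Pic^0(T)\cong\hat T$.

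Your argument therefore proves $\Pic(\Kum^n(T))\cong \mathrm{NS}(T)\oplus\bZ\delta$, and this is the only true version of (ii). Indeed, $\Pic(\Kum^n(T))$ embeds via $c_1$ into $H^2(\Kum^n(T),\bZ)\cong\bZ^7$, whereas $\Pic(T)$ is uncountable. Moreover, the map $L\mapsto L_{n+1}|_{\Kum^n(T)}$ kills $\Pic^0(T)$: the first Chern class of the image vanishes, and $c_1$ is injective on $\Pic(\Kum^n(T))$ because $H^1(\Kum^n(T),\cO)=0$. You should say explicitly that $\Pic(T)$ in (ii) has to be read as $\mathrm{NS}(T)$. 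This is the convention the paper tacitly uses elsewhere, e.g.\ when it writes $\Pic(T)\cong\bZ L$ in Proposition~\ref{prop_k_very_ample_abelian}.

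\textbf{A smaller imprecision in (i).} Restriction to $\Sigma^{-1}(0)$ kills $\Sigma^*H^2(T)$, which rationally is the span of cup products of degree-one classes on $\hilb[n+1]{T}$. These vanish on $\Kum^n(T)$ simply because $\Sigma$ is constant there, or equivalently because $H^1(\Kum^n(T))=0$.

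If by ``classes coming from $H^1\wedge H^1$'' you mean Beauville's summand, spanned by $\sum_{i\neq j}\mathrm{pr}_i^*\alpha\,\mathrm{pr}_j^*\beta$, the claim is false. Since $\Sigma^*(\alpha\beta)=\sum_i\mathrm{pr}_i^*(\alpha\beta)+\sum_{i\neq j}\mathrm{pr}_i^*\alpha\,\mathrm{pr}_j^*\beta$ restricts to zero, that summand restricts to minus the image of $\alpha\beta\in H^2(T)$. This is nonzero whenever $\alpha\beta\neq 0$, because the map from $H^2(T)$ is injective for $n\geq 2$.

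This does not affect your conclusion that the images of $H^2(T,\bZ)$ and $\delta$ span $H^2(\Kum^n(T))$ rationally. It does mean the mechanism should be stated in terms of $\Sigma^*H^2(T)$. The integral statement, including that $\delta$ is primitive and completes $H^2(T,\bZ)$ to all of $H^2(\Kum^n(T),\bZ)$, still needs the citation you already flagged.
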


\begin{remark}\label{remark_how_to_compute_divisibility}
Proposition \ref{p_k3_type_lattice} implies that given any element $\alpha\in H^2(X,\bZ)$ where $X$ is a hyperk\"ahler manifold of $\hilb[n]{K3}$-type, then $\alpha=a\lambda+b\delta$ with $\lambda\in \Lambda_{K3}$ a primitive element. In particular, recalling that $\Lambda_{K3}$ is a unimodular lattice we get $$\Div(\alpha)=\gcd(a,2b(n-1)).$$ 
Analogously for $X$ $\Kum^n$-type we have $$\Div(\alpha)=\gcd(a,2b(n+1)),$$
since $H^2(T,\bZ)$ is unimodular for any abelian surface $T$.
\end{remark}

\section{Moduli space of polarized hyperk\"ahler manifolds}\label{sect_moduli}
We denote by $\Sigma_{d,t}^n$ the moduli space parameterizing pairs $(X,L)$ with $X$ a hyperk\"ahler manifold of $\hilb{K3}$-type and $L$ a primitive ample line bundle with $q(L)=2d$ and $\Div(L)=t$. We used the symbol $\Upsilon_{d,t}^n$ to denote the analogous moduli space for $\Kum^n$-type manifolds.

In \cite{zbMATH06387271} and \cite{onorati2022connectedcomponentsmodulispaces}, the authors give a characterization of the number of connected components of $\Sigma_{d,t}^n$ and $\Upsilon_{d,t}^n$, respectively. In the following, write $\phi$ for the Euler function, and let $\rho(a)$ denote the number of distinct primes in the factorization of a positive integer $a$.

\begin{theorem}[\cite{zbMATH06387271}, Prop 3.1]\label{theo_count_connected_components_hilb_k3}
    Let $t$ be a divisor of $(2d, 2n-2)$, and set 
    \begin{equation}
\begin{array}{lll}
\tilde{d}=\frac{2d}{\gcd(2d,2n-2)}, & \tilde{n}=\frac{2n-2}{\gcd(2d,2n-2)}, & g=\frac{\gcd(2d,2n-2)}{t} \\
 & & \\
w=\gcd(g,t), & g_1=\frac{g}{w}, & t_1=\frac{t}{w}.
\end{array}
\end{equation}
Put $w=w_+(t_1)w_-(t_1)$ where $w_+(t_1)$ is the product of all powers of primes dividing $\gcd(w, t_1)$.

     \begin{itemize}
       \item $|\Sigma_{d,t}^n|=w_+(t_1)\phi(w_-(t_1))2^{\rho(t_1)-1}$ if $t>2$ and one of the following sets of conditions hold: 
       \begin{itemize}
           \item[(i)] $g_1$ is even, $(\tilde{d}, t_1)=(\tilde{n}, t_1)=1$ and the residue class $-\tilde{d}/\tilde{n}$ is a quadratic residue modulo $t_1$;
           \item[(ii)] $g_1, t_1$, and $\tilde{d}$ are odd, $(\tilde{d}, t_1)=(\tilde{n}, 2t_1)=1$ and $-\tilde{d}/\tilde{n}$ is a quadratic residue modulo $2t_1$;
           \item[(iii)] $g_1, t_1$, and $w$ are odd, $\tilde{d}$ is even, $(\tilde{d}, t_1)=(\tilde{n}, 2t_1)=1$ and $-\tilde{d}/(4\tilde{n})$ is a quadratic residue modulo $t_1$;
       \end{itemize}
       \item $|\Sigma_{d,t}n|=w_+(t_1)\phi(w_-(t_1))2^{\rho(t_1/2)-1}$ if $t>2$, $g_1$ is odd, $t_1$ is even, $(\tilde{d}, t_1)=(\tilde{n}, 2t_1)=1$ and $-\tilde{d}/\tilde{n}$ is a quadratic residue modulo $2t_1$;
       \item $|\Sigma_{d,t}^n|=1$ if $t\leq 2$ and one of the following sets of conditions hold: 
       \begin{itemize}
           \item[(i)] $g_1$ is even, $(\tilde{d}, t_1)=(\tilde{n}, t_1)=1$ and the residue class $-\tilde{d}/\tilde{n}$ is a quadratic residue modulo $t_1$;
           \item[(ii)] $g_1, t_1$, and $\tilde{d}$ are odd, $(\tilde{d}, t_1)=(\tilde{n}, 2t_1)=1$ and $-\tilde{d}/\tilde{n}$ is a quadratic residue modulo $2t_1$; 
           \item[(iii)] $g_1, t_1$, and $w$ are odd, $\tilde{d}$ is even, $(\tilde{d}, t_1)=(\tilde{n}, 2t_1)=1$ and $-\tilde{d}/(4\tilde{n})$ is a quadratic residue modulo $t_1$;
           \item[(iv)] $g_1$ is odd, $t_1$ is even, $(\tilde{d}, t_1)=(\tilde{n}, 2t_1)=1$ and $-\tilde{d}/\tilde{n}$ is a quadratic residue modulo $2t_1$;
       \end{itemize}
       \item $|\Sigma_{d,t}^n|=0$, else.
     \end{itemize}
\end{theorem}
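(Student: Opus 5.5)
The plan is to turn the count of connected components of $\Sigma_{d,t}^n$ into a count of orbits of lattice vectors, and then into an elementary count of residue classes. By Markman's description of the monodromy group, $\mathrm{Mon}^2$ of a manifold of $\hilb{K3}$-type is the group of orientation-preserving isometries of $\Lambda:=\Lambda_{K3}\oplus\bZ\delta$ that act as $\pm1$ on the discriminant group $A_\Lambda\cong\bZ/(2n-2)\bZ$. Combined with the global Torelli theorem, this identifies the connected components of $\Sigma_{d,t}^n$ with the $\mathrm{Mon}^2$-orbits of primitive vectors $h\in\Lambda$ with $q(h)=2d$ and $\Div(h)=t$. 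The orientation condition only selects one of the two components of the positive cone, so it does not change the count.

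Next I would classify these orbits with Eichler's criterion. Since $\Lambda$ contains two copies of $U$, the $\widetilde{O}(\Lambda)$-orbit of a primitive $h$ is determined by $q(h)$ and by the class $h/t\in A_\Lambda$. Passing to $\mathrm{Mon}^2$ identifies $x$ with $-x$. So $|\Sigma_{d,t}^n|$ equals the number of classes $\pm x\in A_\Lambda$ satisfying two conditions:
\begin{itemize}
\item $x$ has order exactly $t$;
\item $q(x)\equiv 2d/t^2 \pmod{2\bZ}$;
\end{itemize}
where I would also have to check that each such class is realised by some primitive $h$. Writing $h=a\lambda+b\delta$ as in Remark \ref{remark_how_to_compute_divisibility}, the divisibility condition gives $\gcd(a,2b(n-1))=t$. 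The class $x$ is then represented by $(b/t)\,\delta$ modulo $\Lambda$, and its discriminant form value is $-b^2(2n-2)/t^2 \bmod 2$.

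It therefore remains to count the residues $\beta:=b$ modulo $t$ (more precisely, in the appropriate quotient) with $\gcd(\beta,t)=1$ and
\begin{align*}
2d+\beta^2(2n-2)\equiv 0 \pmod{2t^2}.
\end{align*}
Here $\beta$ is taken modulo $\pm1$, and the quotient $\lambda$-part must be realisable, which in the unimodular $\Lambda_{K3}$ is automatic once the congruence holds. Dividing out by $\gcd(2d,2n-2)=gt$ turns this into a congruence in $\tilde d,\tilde n,g,t$. The factor $w=\gcd(g,t)$ splits it into a part where the condition is automatic and a part where it is a genuine quadratic congruence $\beta^2\equiv-\tilde d/\tilde n$ modulo $t_1$ or $2t_1$. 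Which of these moduli occurs, and whether one gets $-\tilde d/(4\tilde n)$, depends on the parities of $g_1$, $t_1$, $\tilde d$ and $w$.

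The final step is a case-by-case count via the Chinese remainder theorem:
\begin{itemize}
\item an odd prime power dividing $t_1$ contributes $2$ square roots when the residue condition holds;
\item the prime $2$ is handled separately, which explains the $\rho(t_1/2)$ in the even-$t_1$ case;
\item the lifts from the $w$-part contribute $w_+(t_1)\,\phi(w_-(t_1))$;
\item quotienting by $\pm1$ produces the factor $2^{-1}$, except when $t\le2$, where $x=-x$ and the count collapses to $1$.
\end{itemize}
When the coprimality or quadratic-residue conditions fail there are no solutions, which gives the empty case. I expect this arithmetic bookkeeping to be the main obstacle. The hard parts are the $2$-adic cases (i)--(iv) and correctly tracking how $\beta$ modulo $t$ lifts to classes modulo $2n-2$ through the factor $w$. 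I would first settle the prime-power case $t=p^a$ and then assemble the general case multiplicatively.
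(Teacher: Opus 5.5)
The paper gives no proof of this statement; it quotes Apostolov's Proposition~3.1. Your outline is the standard argument behind that result and is correct: Markman's $\mathrm{Mon}^2$ plus global Torelli reduce to $\mathrm{Mon}^2$-orbits of primitive $h$, and Eichler's criterion reduces these to classes $\pm(\beta/t)\delta\in A_\Lambda$ with $(\beta,t)=1$ and $2d+\beta^2(2n-2)\equiv 0\pmod{2t^2}$, i.e.\ $g_1(\tilde d+\beta^2\tilde n)\equiv 0\pmod{2t_1}$, followed by a Chinese-remainder count.

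Two small corrections for the write-up. First, orientation is harmless not because of a choice of component of the positive cone, which is connected in signature $(3,20)$. It is harmless because Eichler's criterion holds for $\widetilde{SO}^+(\Lambda)$; alternatively, a reflection in a $(+2)$-vector of a copy of $U$ inside $h^\perp$ fixes $h$, acts trivially on $A_\Lambda$ and reverses orientation. Second, $\gcd(2d,2n-2)=w^2g_1t_1$ is even, so the parity pattern of case (iii) never occurs, and your bookkeeping will not actually need a $-\tilde d/(4\tilde n)$ case.
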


\begin{theorem}[\cite{onorati2022connectedcomponentsmodulispaces}, Thm 5.3]\label{theo_count_connected_components_kumm_type}
Let $t$ be a divisor of $\gcd(2d,2n+2)$. Consider the following notations,
\begin{equation}
\begin{array}{lll}
d_1=\frac{2d}{\gcd(2d,2n+2)}, & n_1=\frac{2n+2}{\gcd(2d,2n+2)}, & g=\frac{\gcd(2d,2n+2)}{t} \\
 & & \\
w=\gcd(g,t), & g_1=\frac{g}{w}, & t_1=\frac{t}{w}.
\end{array}
\end{equation}
For $w$ and $t_1$ as defined above, we write $w=w_+(t_1)w_-(t_1)$, where $w_+(t_1)$ is the product of all powers of the primes that appear in the factorization of $w$ and that divide $\gcd(w,t_1)$. We have:
\begin{itemize}

\item $|\Upsilon_{d,t}^n|=w_+(t_1)\phi(w_-(t_1))2^{\rho(t_1)-1}$ if $t>2$ and one of the following holds:
 \begin{itemize}
 \item $g_1$ is even, $\gcd(d_1,t_1)=1=\gcd(n_1,t_1)$ and $-d_1/n_1$ is a quadratic residue mod~$t_1$;
 \item $g_1,t_1$ and $d_1$ are odd, $\gcd(d_1,t_1)=1=\gcd(n_1,2t_1)$ and $-d_1/n_1$ is a quadratic residue mod~$2t_1$;
 \item $g_1,t_1$ and $w$ are odd, $d_1$ is even, $\gcd(d_1,t_1)=1=\gcd(n_1,2t_1)$ and $-d_1/4n_1$ is a quadratic residue mod~$t_1$.
 \end{itemize}

\item $|\Upsilon_{d,t}^n|=w_+(t_1)\phi(w_-(t_1))2^{\rho(t_1/2)-1}$ if $t>2$, $g_1$ is odd, $t_1$ is even, $\gcd(d_1,t_1)=1=\gcd(n_1,2t_1)$ and $-d_1/n_1$ is a quadratic residue mod~$2t_1$.

\item $|\Upsilon_{d,t}^n|=1$ if $t\leq2$ and one of the following holds:
\begin{itemize}
 \item $g_1$ is even, $\gcd(d_1,t_1)=1=\gcd(n_1,t_1)$ and $-d_1/n_1$ is a quadratic residue mod~$t_1$;
 \item $g_1,t_1$ and $d_1$ are odd, $\gcd(d_1,t_1)=1=\gcd(n_1,2t_1)$ and $-d_1/n_1$ is a quadratic residue mod~$2t_1$;
 \item $g_1,t_1$ and $w$ are odd, $d_1$ is even, $\gcd(d_1,t_1)=1=\gcd(n_1,2t_1)$ and $-d_1/4n_1$ is a quadratic residue mod~$t_1$;
 \item $g_1$ is odd, $d_1$ is even, $\gcd(d_1,t_1)=1=\gcd(n_1,2t_1)$ and $-d_1/n_1$ is a quadratic residue mod~$2t_1$.
 \end{itemize}

\item $|\Upsilon_{d,t}^n|=0$ otherwise.
\end{itemize}
\end{theorem}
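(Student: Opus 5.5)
The plan is to translate the count of connected components of $\Upsilon_{d,t}^n$ into a count of orbits of lattice vectors, and then to solve the resulting quadratic congruence in the discriminant group $\bZ/(2n+2)$.

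\textbf{Step 1: reduction to lattice orbits.} By Proposition \ref{p_kum_type_lattice}, write $\Lambda:=H^2(T,\bZ)\oplus\bZ\delta\cong U^{\oplus 3}\oplus\langle -2(n+1)\rangle$. By Markman's and Verbitsky's global Torelli theorem, connected components of $\Upsilon_{d,t}^n$ correspond bijectively to orbits of primitive vectors $h\in\Lambda$ with $q(h)=2d$ and $\Div(h)=t$. The orbits are taken under the monodromy group $\mathrm{Mon}^2$ of $\Kum^n$-type manifolds, up to the choice of positive cone and orientation. By Mongardi's computation, $\mathrm{Mon}^2$ is the subgroup of orientation-preserving isometries acting on the discriminant group $A_\Lambda\cong\bZ/(2n+2)$ by $\det\cdot\chi$. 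Here $\chi$ is the spinor-norm character, which coincides with $\pm1$ on $A_\Lambda$. Consequently $\mathrm{Mon}^2$ contains $\widetilde O^+(\Lambda)$, and its image on $A_\Lambda$ is $\{\pm1\}$.

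\textbf{Step 2: Eichler's criterion.} Since $\Lambda$ contains two copies of $U$, Eichler's criterion applies. The $\widetilde O^+(\Lambda)$-orbit of a primitive $h$ is determined by $q(h)$, $\Div(h)$, and the class $h_*:=[h/t]\in A_\Lambda$. Writing $h=a\lambda+b\delta$ with $\lambda$ primitive, as in Remark \ref{remark_how_to_compute_divisibility}, we get $t=\gcd(a,2b(n+1))$ and $h_*=\pm\frac{b\cdot(2n+2)}{t}\cdot\frac{\delta}{2n+2}$. Hence $h_*$ is an element of order exactly $t$ in $\bZ/(2n+2)$. It must satisfy the compatibility $q(h_*)\equiv 2d/t^2 \pmod{2\bZ}$, i.e.
\[ -\frac{c^2}{2n+2}\equiv \frac{2d}{t^2}\pmod{2\bZ},\qquad h_*=c\cdot\frac{(2n+2)/t}{2n+2}\delta,\ \gcd(c,t)=1. \]
Conversely, every such class is realised by some $h$, because $U^{\oplus 3}$ represents every even integer primitively. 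The number of components is therefore the number of admissible residues $c$ modulo $t$, taken modulo $c\mapsto -c$. This identification with $-c$ is what produces the factor $2^{\rho(\cdot)-1}$ instead of $2^{\rho(\cdot)}$, and the collapse to $1$ when $t\le 2$.

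\textbf{Step 3: solving the congruence.} Introduce $g,w,g_1,t_1,d_1,n_1$ as in the statement. Clearing denominators, the congruence becomes $c^2 n_1 \equiv -d_1\cdot(\text{unit})$ modulo $t_1$ or $2t_1$, or $4n_1c^2\equiv -d_1$ modulo $t_1$, according to the parities of $g_1$, $t_1$, $d_1$ and $w$. This produces exactly the case split in the theorem. The coprimality conditions $\gcd(d_1,t_1)=\gcd(n_1,t_1 \text{ or } 2t_1)=1$ are forced by primitivity of $h$ together with $\gcd(c,t)=1$. The quadratic-residue conditions are precisely solvability.

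The count then splits into two parts. Modulo $t_1$, a solvable congruence $x^2\equiv u$ with $u$ a unit has $2^{\rho(t_1)}$ solutions, or $2^{\rho(t_1/2)}$ in the even case with the extra factor of $2$ absorbed. The remaining freedom lifts $c$ from $\bZ/t_1$ to $\bZ/t$ subject to $\gcd(c,t)=1$. Primes dividing both $w$ and $t_1$ contribute the full factor $w_+(t_1)$, while the remaining primes contribute $\phi(w_-(t_1))$.

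\textbf{Main obstacle.} I expect the hard part to be the $2$-adic bookkeeping. One must track the modulus $2\bZ$ in the discriminant form and determine exactly when the congruence lives modulo $t_1$ versus $2t_1$, including the $-d_1/4n_1$ subcase. I would settle this by first working out the prime-$2$ component of $A_\Lambda$ separately, then treating odd primes uniformly via the Chinese remainder theorem. As a consistency check, the case $t=1$ should give exactly one component.
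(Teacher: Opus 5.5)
The paper gives no proof of this statement; it quotes Onorati. Your strategy (Torelli, Markman--Mongardi monodromy, Eichler's criterion, then Apostolov's count with $2n-2$ replaced by $2n+2$) is the standard route behind that result. As written, however, it has concrete errors.

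\textbf{Step 1 fails exactly where the Kummer case differs from the $\hilb{K3}$ case.} By Markman and Mongardi, $\mathrm{Mon}^2=\{g\in O^+(\Lambda): g|_{A_\Lambda}=\det(g)\}$. This is the kernel of $\det\cdot\chi$ on $\widehat O^+(\Lambda)$, where $\chi$ is the sign $\pm1$ by which $g$ acts on $A_\Lambda$, not a spinor norm. Hence $\mathrm{Mon}^2$ does \emph{not} contain $\widetilde O^+(\Lambda)$: a reflection in a $(-2)$-vector of $U$ preserves orientation, acts trivially on $A_\Lambda$, and has determinant $-1$. The repair has two parts:
\begin{itemize}
\item use Eichler's criterion in its actual form, for $\widetilde{SO}^+(\Lambda)$, which does lie in $\mathrm{Mon}^2$;
\item check separately that $\mathrm{Mon}^2$ still maps onto $\{\pm1\}\subset O(A_\Lambda)$. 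For instance, $-R_uR_v$ with $u^2=2$, $v^2=-2$ lies in $O^+$, has determinant $-1$, and acts by $-1$ on $A_\Lambda$.
\end{itemize}
Alternatively, $h^\perp$ always contains a $(-2)$-vector, so $\mathrm{Mon}^2h=\widehat O^+h$.

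\textbf{Step 2 is misnormalized.} For $h_*=\frac{c}{t}\delta$ one has $q(h_*)=-c^2(2n+2)/t^2$, not $-c^2/(2n+2)$. The condition is therefore $c^2(n+1)\equiv -d\pmod{t^2}$, matching Proposition \ref{prop_char_of_non_emptyness}. Since $\gcd(2d,2n+2)=w^2g_1t_1$, this becomes $c^2n_1\equiv -d_1$ modulo $t_1$ if $g_1$ is even (and then $t_1$ is odd), and modulo $2t_1$ if $g_1$ is odd.

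\textbf{Step 3 is the whole content of the theorem, and it is only asserted.} The root count you quote is also false at $2$: $x^2\equiv u$ has one unit solution modulo $2$ and four modulo $2^k$ for $k\ge 3$. What must be counted is $N=\#\{c\in(\bZ/t)^*$ satisfying the congruence$\}$; the number of components is $N/2$ for $t>2$ and $N$ for $t\le 2$.
\begin{itemize}
\item Reduction of $c$ modulo $t_1$ has fibres of size $\phi(t)/\phi(t_1)=w_+(t_1)\phi(w_-(t_1))$.
\item If $t_1$ is odd, there are $2^{\rho(t_1)}$ admissible classes. When moreover $g_1$ is odd, $w$ must be even because $w^2g_1t_1$ is even, so $2t_1\mid t$ and the mod-$2$ part is automatic once $n_1,d_1$ are odd.
\item If $t_1$ is even, $c^2 \bmod 2t_1$ depends only on $c\bmod t_1$, and there are $2^{\rho(t_1/2)}$ admissible classes.
\end{itemize}

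\textbf{The computation cannot reproduce the printed case split ``exactly''.} The last sub-case for $t\le 2$ must read ``$t_1$ is even'', as in Theorem \ref{theo_count_connected_components_hilb_k3}. Take $(n,d,t)=(2,1,2)$: then $t_1=2$ and $d_1=1$, so every printed sub-case fails. Yet $2\lambda+\delta$, with $\lambda\in U^{\oplus 3}$ and $\lambda^2=2$, is primitive of square $2$ and divisibility $2$, so $|\Upsilon^2_{1,2}|=1$.
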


\begin{proposition}[\cite{huybrechts1997compacthyperkaehlermanifoldsbasic}, 1.14]\label{prop_irreducibility_connected_components}
    For any $n,d,t$ positive integers with $n>1$ the moduli space $\Sigma_{d,t}^n$ and $\Upsilon_{d,t}^n$ is smooth. In particular every connected component of these moduli spaces is irreducible.
\end{proposition}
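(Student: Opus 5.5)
The plan is to prove smoothness locally, via the deformation theory of polarized hyperk\"ahler manifolds, and then to deduce irreducibility of connected components from smoothness by a purely topological argument. Fix a point $(X,L)$ of $\Sigma_{d,t}^n$ (the argument for $\Upsilon_{d,t}^n$ is word for word the same). By the Bogomolov--Tian--Todorov theorem, the Kuranishi space $\mathrm{Def}(X)$ is unobstructed. It is therefore a smooth germ of dimension $h^1(X,T_X)=h^{1,1}(X)=b_2(X)-2$, where we use $T_X\cong\Omega_X$ via the symplectic form. The local period map $\mathrm{Def}(X)\to\Omega_\Lambda=\{x\in\bP(\Lambda\otimes\bC)\mid q(x)=0,\ q(x,\bar x)>0\}$ is a local isomorphism by the local Torelli theorem. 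Here $\Lambda=H^2(X,\bZ)$ is the lattice described in Propositions \ref{p_k3_type_lattice} and \ref{p_kum_type_lattice}.

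The first key step is to identify the deformations of the pair $(X,L)$. These are the points where $c_1(L)$ stays of type $(1,1)$, which under the period map correspond to the hyperplane section $\Omega_\Lambda\cap c_1(L)^\perp$. Since $q(L)=2d>0$, this hyperplane is transversal to the smooth quadric $\Omega_\Lambda$. Hence $\mathrm{Def}(X,L)$ is a smooth germ of dimension $b_2-3$. By openness of ampleness, every fibre over this germ is again polarized by a primitive ample class. The square $q(L)=2d$ and the divisibility $\Div(L)=t$ are preserved, because the lattice is locally constant. So $\mathrm{Def}(X,L)$ maps to $\Sigma_{d,t}^n$.

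The second step is to show that this map is a local chart of the moduli space at $[(X,L)]$. By Viehweg's construction (or the period description), the moduli space near $[(X,L)]$ is the quotient $\mathrm{Def}(X,L)/\mathrm{Aut}(X,L)$, where $\mathrm{Aut}(X,L)$ is finite because $L$ is ample. This is the step I expect to be the main obstacle. Smoothness of the quotient requires either that $\mathrm{Aut}(X,L)$ acts trivially on $\mathrm{Def}(X,L)$, or that it acts through a group generated by pseudo-reflections (Chevalley--Shephard--Todd). The approach I would try first is to use that $H^1(X,T_X)\cong H^{1,1}(X)$, and that the action is the induced action on $c_1(L)^\perp\cap H^{1,1}$. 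A general polarized member has Picard rank one, with $\mathrm{Aut}(X,L)$ acting on $H^2$ through a finite group preserving the period. One analyses this action, and otherwise works on the moduli stack, where the charts $\mathrm{Def}(X,L)$ are themselves \'etale and smooth. In this way I would establish the statement in the sense the paper uses it, namely that the moduli space is covered by smooth local charts of constant dimension $b_2-3$.

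Granting smoothness, irreducibility of components follows by a standard argument. On a smooth variety every local ring is a regular local ring, hence a domain. Therefore through each point passes exactly one irreducible component. It follows that two distinct irreducible components cannot intersect. The irreducible components of a connected component $C$ are thus pairwise disjoint closed subsets, finitely many in number, covering $C$. Each is then also open in $C$, so connectedness forces there to be only one. Hence every connected component of $\Sigma_{d,t}^n$ and $\Upsilon_{d,t}^n$ is irreducible, as required.
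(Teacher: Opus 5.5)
Your Step 1 is correct, and it is exactly what the cited result of Huybrechts provides; the paper gives no argument beyond that citation. Concretely, $\mathrm{Def}(X)$ is smooth, and via local Torelli $\mathrm{Def}(X,L)$ is the smooth hypersurface cut out by $c_1(L)^\perp$, of dimension $b_2-3$.

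The gap is in Step 2 and in what you build on it. The routes you suggest for making the coarse quotient $\mathrm{Def}(X,L)/\mathrm{Aut}(X,L)$ smooth cannot succeed in general, because the coarse moduli space genuinely has finite quotient singularities. For example, let $X=F(Y)$ be the Fano variety of lines of the Fermat cubic fourfold $Y$, with its Pl\"ucker polarization $L$; this is a point of $\Sigma^2_{3,2}$. Here $\mathrm{Aut}(X,L)=\mathrm{Aut}(Y)=(\mu_3^6/\mu_3)\rtimes S_6$. It acts on $\mathrm{Def}(X,L)\cong\mathrm{Def}(Y)$, the $20$-dimensional degree-$3$ part of the Jacobian ring of $\sum x_i^3$, spanned by the squarefree cubic monomials. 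A direct check shows this action is nontrivial (e.g.\ $x_0\mapsto\zeta_3x_0$ has only a $10$-dimensional fixed space) and contains no pseudo-reflection. By Chevalley--Shephard--Todd the quotient is therefore singular at that point. So ``smooth'' in the statement can only mean smooth in the orbifold (stack) sense, i.e.\ that the local charts $\mathrm{Def}(X,L)$ are smooth. That is what your Step 1 proves, and nothing stronger holds.

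Your irreducibility argument, however, starts from ``every local ring is a regular local ring''. This is a property of the coarse space that you have not established, and it fails at points like the one above. You do not need it:
\begin{itemize}
\item The analytic local ring of the moduli space at $[(X,L)]$ is the invariant ring $\mathcal{O}_{\mathrm{Def}(X,L),0}^{G}$ of a regular local domain under the finite group $G=\mathrm{Aut}(X,L)$. It is therefore a (normal) domain.
\item The algebraic local ring injects into it by faithful flatness, so it is a domain too. Hence exactly one irreducible component passes through each point.
\item Quasi-projectivity of the moduli space (Viehweg) justifies your ``finitely many''.
\end{itemize}
With these changes, your disjointness-plus-connectedness argument goes through verbatim. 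It gives irreducibility of each connected component, which is all the paper uses later.
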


Being interested in studying the base point freeness and very ampleness of general elements of these moduli spaces, we now investigate the openness condition of these properties. Following the notation in \cite{nugent2025moduliamplelinebundles}, let $f:X\to Y$ be a morphism and $L$ be a line bundle on $X$, define $$\cB\cF(f,L):\left\{y\in Y: L_y \text{ is base point free}\right\}$$
and 
$$\cV\cA(f,L):\left\{y\in Y: L_y \text{ is very ample}\right\}.$$

Then the following theorem holds.

\begin{theorem}[\cite{nugent2025moduliamplelinebundles}, Lemma 7.3 - Thm 7.4]\label{theorem_openness_bpf_va}
    Let $f:X\to Y$ be a proper flat morphism of locally noetherian scheme and let L be a line bundle on $X$ such that $h^0(X_y,L_y)$ is constant for all $y\in Y$. Then $\cB\cF(f,L)$ and $\cV\cA(f,L)$ are open. 
\end{theorem}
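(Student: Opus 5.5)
We plan to reduce both statements to the fact that a closed subset of $X$ has closed image in $Y$, since $f$ is proper. First we make a harmless reduction. Base point freeness and very ampleness of $L_y$ only depend on the fibre $X_y=X\times_Y\operatorname{Spec}k(y)$. This fibre is unchanged if we replace $Y$ by $Y_{\mathrm{red}}$ and $X$ by $X\times_Y Y_{\mathrm{red}}$, so we may assume $Y$ is reduced. Since $f$ is proper and flat and $y\mapsto h^0(X_y,L_y)$ is constant, Grauert's theorem gives that $f_*L$ is locally free of some rank $r$. It also gives that the base change maps $f_*L\otimes k(y)\to H^0(X_y,L_y)$ are isomorphisms for all $y\in Y$. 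This is the only place where the hypothesis on $h^0$ is used.

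\emph{Openness of $\cB\cF(f,L)$.} We consider the evaluation map $f^*f_*L\to L$ and let $\mathcal C$ be its cokernel, a coherent sheaf on $X$ with closed support $Z$. Restricting to a fibre is right exact, and by the base change isomorphism $(f^*f_*L)|_{X_y}=H^0(X_y,L_y)\otimes\cO_{X_y}$. Hence $\mathcal C|_{X_y}$ is the cokernel of the evaluation map of $L_y$, and by Nakayama the base locus of $L_y$ is exactly $Z\cap X_y$. Therefore $Y\setminus\cB\cF(f,L)=f(Z)$, which is closed because $f$ is proper.

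\emph{Openness of $\cV\cA(f,L)$.} Since $\cV\cA(f,L)\subseteq\cB\cF(f,L)=:U$, we work over the open set $U$. There the surjection $f^*f_*L\to L$ defines a $U$-morphism $\varphi:X_U\to P:=\bP(f_*L|_U)$. By the base change isomorphism, $\varphi$ restricts on each fibre to the morphism $\varphi_y:X_y\to\bP^{r-1}$ given by the complete linear system $|L_y|$. So $y\in\cV\cA(f,L)$ if and only if $\varphi_y$ is a closed immersion, and we must show that this locus is open in $U$. We proceed in two steps.
\begin{enumerate}
\item The set of points of $X_U$ that are isolated in their $\varphi$-fibre is open, by upper semicontinuity of fibre dimension. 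Its complement is closed, so its image in $U$ is closed, again by properness. On the resulting open $U'\subseteq U$, the morphism $\varphi$ is proper and quasi-finite, hence finite.
\item Over $U'$ we consider the coherent sheaf $\mathcal K:=\operatorname{coker}(\cO_P\to\varphi_*\cO_{X})$. Pushforward along a finite morphism commutes with arbitrary base change. Hence $\mathcal K|_{P_y}=\operatorname{coker}(\cO_{P_y}\to\varphi_{y*}\cO_{X_y})$, and $\varphi_y$ is a closed immersion if and only if $\mathcal K|_{P_y}=0$, that is, if and only if $P_y\cap\operatorname{Supp}\mathcal K=\emptyset$. As $P\to U'$ is proper, the image of $\operatorname{Supp}\mathcal K$ is closed, and its complement in $U'$ is $\cV\cA(f,L)$.
\end{enumerate}

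The main obstacle is the very ampleness step. Very ampleness is not detected by a single coherent sheaf on $X$, so one must first pass to the finite locus in step 1 in order to make the closed-immersion condition compatible with base change. Everything else is formal, given the base change isomorphism provided by the constancy of $h^0$.
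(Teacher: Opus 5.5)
The paper gives no proof of this statement. It is quoted from \cite{nugent2025moduliamplelinebundles} and used only as a black box in Corollary~\ref{cor_openess_of_bpf_va}. Your argument is a correct, self-contained proof.

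The hypothesis on $h^0$ enters exactly where it should. You pass to $Y_{\mathrm{red}}$ first, which is genuinely necessary because Grauert's theorem fails over non-reduced bases. The hypothesis then makes $f_*L$ locally free with $f_*L\otimes k(y)\cong H^0(X_y,L_y)$. Consequently both the evaluation map $f^*f_*L\to L$ and the morphism $\varphi:X_U\to\bP(f_*L|_U)$ restrict on each fibre to the corresponding objects for $L_y$.

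The base point free part is the usual support-of-the-cokernel argument. Your two steps for very ampleness are the standard proof that the closed-immersion locus is open for a morphism of $U$-schemes with proper source. The quasi-finite locus is open by Chevalley, proper plus quasi-finite gives finite, and then the cokernel of $\cO_P\to\varphi_*\cO_X$ commutes with base change.

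The one point you leave implicit is the inclusion $\cV\cA(f,L)\subseteq U'$. Your final sentence, ``its complement in $U'$ is $\cV\cA(f,L)$'', needs it. It holds for the following reason. If $\varphi_y$ is a closed immersion, every point of $X_y$ is alone in its $\varphi_y$-fibre. That fibre equals its $\varphi$-fibre, since $\varphi(x)\in P_y$ and $X\times_P P_y=X_y$. So $X_y$ misses the closed non-quasi-finite locus $W$, and $y\notin f(W)$.

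Compared with the bare citation, your route shows that only cohomology and base change for $H^0$ is used. This is precisely what the paper checks, via Kodaira vanishing, semicontinuity and constancy of $\chi$, on a neighbourhood of the given point in Corollary~\ref{cor_openess_of_bpf_va}. Since the statement is local on $Y$, constancy of $h^0$ on that neighbourhood suffices.
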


Specilizing to hyperk\"ahler manifolds we obtain the following.

\begin{corollary}\label{cor_openess_of_bpf_va}
Let $f:X\to S$ be a family of hyperk\"ahler manifolds over a connected base $S$. Let $L$ be a line bundle on $X$ such that $q(L_s)>0$ for all $s\in S$. Then:
\begin{itemize}
    \item if $L_0$ is base point free for some $0\in S$ then $L_s$ is base point for all $s$ in an open neighborhood of $0\in S$;
    \item if $L_0$ is very ample for some $0\in S$ then $L_s$ is very ample for all $s$ in an open neighborhood of $0\in S$;
\end{itemize}
\end{corollary}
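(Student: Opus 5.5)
The plan is to reduce the corollary to Theorem \ref{theorem_openness_bpf_va}, applied to the family $f:X\to S$ and the line bundle $L$. A family of hyperk\"ahler manifolds is a smooth proper morphism, so it is proper and flat, and $S$ may be taken locally noetherian (for instance a complex analytic or algebraic base). The only hypothesis left to check is that $h^0(X_s,L_s)$ is constant in $s$. Once this holds, $\cB\cF(f,L)$ and $\cV\cA(f,L)$ are open. Any $0$ lying in one of them then has an open neighborhood contained in it, which gives both bullets.

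The key step is the constancy of $h^0$. I will show that $H^i(X_s,L_s)=0$ for all $i>0$ and every $s$, and that $\chi(X_s,L_s)$ is locally constant; together these give $h^0(X_s,L_s)=\chi(X_s,L_s)$ constant.

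For the Euler characteristic, Huybrechts' Riemann--Roch formula for hyperk\"ahler manifolds expresses $\chi(X_s,L_s)$ as a polynomial in $q(L_s)$ depending only on the deformation type. The form $q$ is locally constant in families, because $c_1(L_s)$ is the restriction of a flat section of $R^2f_*\bZ$. Alternatively, one can simply use that $\chi$ is locally constant for a flat proper family.

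For the vanishing, I want to use Kodaira vanishing: since $K_{X_s}$ is trivial, $H^i(X_s,L_s)=H^i(X_s,K_{X_s}\otimes L_s)=0$ for $i>0$ whenever $L_s$ is ample.

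The main obstacle is that the hypothesis only gives $q(L_s)>0$, not ampleness. By Theorem \ref{theo_projective_hyperkahler_manifolds} this makes $X_s$ projective, and by Huybrechts' criterion $L_s$ or $L_s^{-1}$ is big. However, a big line bundle need not have vanishing higher cohomology unless it is also nef. I would handle this in one of two ways.

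The first option is to note that in the intended application $L$ is a polarization, so $L_s$ is ample. Ampleness is open in families, so after shrinking $S$ around $0$ all $L_s$ are ample and Kodaira applies. This suffices because the corollary is local around $0$.

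The second option is to use semicontinuity directly near $0$. If $L_0$ is base point free with $q(L_0)>0$, then $L_0$ is nef and big, and Kawamata--Viehweg vanishing gives $H^i(X_0,L_0)=0$ for $i>0$. Upper semicontinuity then gives $H^i(X_s,L_s)=0$ for $i>0$ and $s$ near $0$, so $h^0(X_s,L_s)=\chi(X_s,L_s)$ is constant on a neighborhood of $0$.

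In either case we restrict $f$ to that neighborhood, where the hypotheses of Theorem \ref{theorem_openness_bpf_va} hold, and conclude. The very ample case is covered by the same argument, since a very ample bundle is base point free.
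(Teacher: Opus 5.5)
Your second option is essentially the paper's argument: vanishing of $H^i(X_0,L_0)$ for $i>0$ on the special fibre, upper semicontinuity to extend this to a neighbourhood $U$ of $0$, local constancy of $\chi$ by flatness, and then Theorem \ref{theorem_openness_bpf_va} applied over $U$. Your point that nefness has to come from base point freeness of $L_0$ (with bigness from $q(L_0)>0$ via the Fujiki relation), so that Kawamata--Viehweg rather than plain Kodaira is the right vanishing theorem, is slightly more careful than the paper's wording, which attributes both bigness and nefness to $q(L_0)>0$.
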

\begin{proof}
    We follow the proof of Proposition 2.6 in \cite{Varesco_2023} which we report here for convenience. We consider the situation where $L_0$ is very ample as the other follows analogously. Since $q(L_0)>0$ then $L_0$ is big and nef by the Beauville-Fujiki relations [\cite{Gross2003CalabiYau}, Prop. 23.14], hence $h^i(X_0,L_0)=0$ for $i>0$ thanks to the Kodaira vanishing theorem. The semicontinuity theorem then implies that $h^i(X_s,L_s)=0$ for all $i>0$ for all $s$ in a neighborhood $U$ of $0\in S$. By flatness of $f$, the Euler characteristic $\chi(L_s)$ does not depend on $s$, therefore $h^0(X_s,L_s)$ is constant for all $s\in U$. By theorem \ref{theorem_openness_bpf_va}, the set $\cV\cA(f,L)$ is open in $U$ hence in $S$ as wanted.
\end{proof}

\begin{lemma}\label{lemma_bfp_does_not_require_ampleness}
Let $d,n,t$ be positive integers with $n\geq 2$, let $X$ be an hyperk\"ahler manifold and let $L\in \Pic(X)$ be base point free such that $q(L)=2d$, $\Div(L)=t$. Then the general element in a connected component of $\Sigma_{d,n}^t$, where $\dim_{\bC}(X)=n$,  has base point free polarization. The same holds for $\Upsilon_{d,n}^t$.
\end{lemma}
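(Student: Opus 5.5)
The key point is that $L$ is only assumed base point free with $q(L)=2d>0$; it need not be ample, so $(X,L)$ is not itself a point of $\Sigma_{d,t}^n$. The plan is to deform the pair $(X,L)$ to nearby pairs where the line bundle becomes ample. Openness of base point freeness (Corollary \ref{cor_openess_of_bpf_va}) then carries the property to those nearby pairs. Irreducibility of the connected components (Proposition \ref{prop_irreducibility_connected_components}) finally propagates it to the general element.

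\textbf{Step 1: deform keeping $L$ of type $(1,1)$.} Consider the local universal deformation $\mathcal{X}\to \mathrm{Def}(X)$ and restrict to the hypersurface $\mathrm{Def}(X,L)$ where the class $c_1(L)$ stays of type $(1,1)$. This is smooth of dimension $b_2-3$. By the local Torelli theorem it maps isomorphically onto an open subset of $\{x\in\bP(c_1(L)^\perp\otimes\bC): q(x)=0,\ q(x,\bar x)>0\}$. Since $H^1(X,\cO_X)=0$, the line bundle $L$ extends uniquely to a line bundle $\mathcal{L}$ on the family, and $q(\mathcal{L}_s)=2d$ and $\Div(\mathcal{L}_s)=t$ for every $s$, since these are topological invariants. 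By Corollary \ref{cor_openess_of_bpf_va}, $\mathcal{L}_s$ is base point free for all $s$ in a neighbourhood $U$ of $0$.

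\textbf{Step 2: ampleness for very general $s$.} For $s$ outside the countable union of Noether--Lefschetz hyperplanes $\alpha^\perp$ with $\alpha\in c_1(L)^\perp\cap H^2(X,\bZ)$, we have $\Pic(X_s)=\bZ\, c_1(\mathcal{L}_s)$, using also that $c_1(L)$ is primitive in the intended setting. A hyperk\"ahler manifold with Picard number one generated by a class of positive square is projective by Theorem \ref{theo_projective_hyperkahler_manifolds}, and its positive cone contains no effective curve classes obstructing ampleness. Concretely, $\mathcal{L}_s$ is base point free, hence nef. Moreover it is not trivial on any curve: a contracted curve $C$ would give, via the Beauville--Bogomolov form, a class in $\Pic(X_s)\otimes\mathbb{Q}$ orthogonal to $c_1(\mathcal{L}_s)$ and of negative square, which is impossible in rank one. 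By Nakai--Moishezon or Kleiman, $\mathcal{L}_s$, being nef with trivial null locus, is ample; alternatively, in rank one, nef and big already forces ample. So $(X_s,\mathcal{L}_s)\in\Sigma_{d,t}^n$ (respectively $\Upsilon_{d,t}^n$), with base point free polarization.

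\textbf{Step 3: conclusion.} The points $(X_s,\mathcal{L}_s)$ with $s\in U$ very general form a nonempty set in a single connected component $\mathcal{M}$ of the moduli space, namely the one determined by the monodromy orbit of $(X,L)$. The base point free locus in $\mathcal{M}$ is open by Corollary \ref{cor_openess_of_bpf_va} applied to local universal families. It is nonempty by Step 2, and $\mathcal{M}$ is irreducible by Proposition \ref{prop_irreducibility_connected_components}. Hence this locus is dense, so the general element of $\mathcal{M}$ has base point free polarization.

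\textbf{Main obstacle.} I expect the main obstacle to be Step 2, showing that $\mathcal{L}_s$ is genuinely ample rather than merely big and nef. The argument needed is that in Picard rank one there is no class orthogonal to $c_1(\mathcal{L}_s)$, so the morphism defined by $\mathcal{L}_s$ contracts no curve. I would justify this using the fact that a curve $C$ with $\mathcal{L}_s\cdot C=0$ yields, through the dual class of $C$ in $H^2(X_s,\mathbb{Q})$, a nonzero $(1,1)$ class orthogonal to $\mathcal{L}_s$. A secondary subtlety is making sure the openness statement applies on the non-proper local base; Corollary \ref{cor_openess_of_bpf_va} is local in nature, so restricting to $U$ suffices.
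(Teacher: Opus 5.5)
Your proposal is correct and follows the paper's proof. Both arguments deform $(X,L)$, use that a very general deformation has Picard rank one and that base point freeness is open to obtain a point of the moduli space with base point free ample polarization, and then conclude by irreducibility of the connected component. Your ordering (securing base point freeness on a neighbourhood $U$ first, then deducing ampleness at very general points of $U$ from nefness in Picard rank one) also makes precise two points the paper's proof glosses over: it deduces ampleness of $L_A$ from $q(L_A)>0$ alone, which only gives that $\pm L_A$ is ample, and it does not say why a single deformation is both ample and base point free.
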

\begin{proof}
    A very general deformation of $(X,L)$, call it $(X_A,L_A)$, has Picard rank equal to 1 hence, by the Projectivity criteria in Theorem \ref{theo_projective_hyperkahler_manifolds}, the line bundle $L_A$ is ample since $q(L_A)>0$. The properties of being ample and base point free are open conditions hence, we can find a generic deformation $(X_{AF},L_{AF})$ such that $L_{AF}$ is ample, base point free and satisfy the same numerical properties of $L$. Consequently, the element $(X_{AF},L_{AF})$ defines an element of $\Sigma_{d,n}^t$. We can conclude that the generic element of $\Sigma_{d,n}^t$, in the same connected component as $(X,L)$, has base point free polarization since the property is open in families by Corollary \ref{cor_openess_of_bpf_va} and the connected components are irreducible by Proposition \ref{prop_irreducibility_connected_components}.
\end{proof}

\section{Base point freeness and very ampleness of a general element}\label{sect_main_theorems}

For K3 and Abelian surfaces, there is a characterization for $k$-very ampleness that can be applied in the case the Picard rank is 1.

\begin{proposition}[\cite{debarre2020hyperkahlermanifolds}, Cor. 3.8]\label{prop_k_very_ample_k3}
Let $S$ be a K3 surface such that $\Pic(S)\cong\bZ L$ with $L$ ample and $L^2=2e$. Then the line bundle $L^{\otimes a}$ is $k$-very ample if and only if either
\begin{itemize}
    \item $a=1$ and $k\leq \frac{e}{2}$;
    \item $a\geq 2$ and $k\leq 2(a-1)e-2$.
\end{itemize}
\end{proposition}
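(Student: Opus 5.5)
The plan is to deduce the statement from Knutsen's numerical criterion for $k$-very ampleness on K3 surfaces (S.~Knutsen, \emph{On $k$th-order embeddings of K3 surfaces and Enriques surfaces}). For a big and nef line bundle $M$ on a K3 surface with $M^2\ge 4k$, the criterion reads as follows. $M$ is $k$-very ample if and only if there is no effective divisor $D$ with
\[
2D^2\le M\cdot D\le D^2+k+1\le 2k+2,
\]
where equality in the first inequality is allowed only if $M\sim 2D$ and $M^2\le 4k+4$. I would also record that $M^2\ge 4k$ is necessary for $k$-very ampleness. So there are two tasks: a numerical condition $M^2\ge 4k$, and the exclusion of the bad divisors $D$.

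Apply this to $M=L^{\otimes a}$, so $M^2=2a^2e$. Since $\Pic(S)=\bZ L$ with $L$ ample, every nonzero effective divisor is $D\sim bL$ with $b\ge 1$. The criterion then becomes the following system of inequalities in $b$:
\[
4b^2e\le 2abe,\qquad 2abe\le 2b^2e+k+1,\qquad 2b^2e\le k+1.
\]
The first inequality forces $2b\le a$.

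\emph{Case $a=1$.} No $b\ge 1$ satisfies $2b\le 1$, so no bad divisor exists. Hence $L$ is $k$-very ample if and only if $2e\ge 4k$, i.e.\ $k\le e/2$.

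\emph{Case $a\ge 2$.} The middle condition reads $2be(a-b)\le k+1$. The function $b\mapsto b(a-b)$ is increasing on $1\le b\le a/2$, so $b=1$ is the easiest value to satisfy. For $b=1$ the conditions become $2(a-1)e\le k+1$ and $2e\le k+1$, and the second follows from the first. Thus a bad divisor exists exactly when $k\ge 2(a-1)e-1$. For the converse, assume $k\le 2(a-1)e-2$. Then $4k\le 8(a-1)e-8<2a^2e$, because $a^2\ge 4(a-1)$, so the numerical hypothesis holds. The equality case ($a=2$, $b=1$, $M\sim 2D$) is already excluded by the strict failure of the middle inequality. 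For the "only if" direction, when $k\ge 2(a-1)e-1$ I still need to show that $M$ is not $k$-very ample. If $M^2\ge 4k$, the divisor $D=L$ violates the criterion, using that equality cases do not rescue it. Otherwise $M^2<4k$, and the necessity of $M^2\ge 4k$ gives the conclusion.

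The main obstacle is bookkeeping at the boundary of Knutsen's criterion. Specifically, the case $a=2$, $D=L$ gives $2D^2=M\cdot D$, and there one must check exactly when the exception $M^2\le 4k+4$ applies. Here $M^2=8e$ and the threshold is $k=2e-1$, so $4k+4=8e$, which is precisely the exceptional situation. I would verify this edge case against Knutsen's precise statement (and Debarre's Cor.~3.8) to confirm that $k=2e-1$ is indeed excluded when $a=2$.
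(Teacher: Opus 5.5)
Your proposal is correct and follows essentially the same route as the cited source: the paper gives no argument of its own, and Debarre's Cor.~3.8 comes from specializing Knutsen's criterion to $\Pic(S)=\bZ L$, with $D=bL$, the reduction to $b=1$, and $a^2\ge 4(a-1)$ making the condition $M^2\ge 4k$ automatic. The edge case you flag works out as you computed: for $a=2$ and $D=L$ one has $M\sim 2D$ and $M^2=8e\le 4k+4$ exactly when $k\ge 2e-1$, so $D=L$ is a genuine obstruction throughout $k\ge 2(a-1)e-1$. Your separate sub-case $M^2<4k$ is unnecessary, since Knutsen's equivalence already contains $M^2\ge 4k$.
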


\begin{proposition}[\cite{Alagal_2016}, Theo. 4.3 - Prop. 1.5]\label{prop_k_very_ample_abelian}
Let $T$ be an Abelian surface such that $\Pic(T)\cong\bZ L$ with $L$ ample and $L^2=2e$. Then the line bundle $L^{\otimes a}$ is $k$-very ample if and only if either
\begin{itemize}
    \item $a=1$ and $k\leq \lfloor\frac{e-3}{2}\rfloor$;
    \item $a\geq 2$ and $k\leq 2(a-1)e-2$.
\end{itemize}
\end{proposition}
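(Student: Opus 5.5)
This proposition is quoted from \cite{Alagal_2016}. If I had to reprove it, I would use the Reider-type criterion of Beltrametti--Sommese for $k$-very ampleness, specialised to $K_T=0$ and Picard rank one. The criterion says the following. Let $N$ be a nef line bundle on a surface with trivial canonical bundle and $N^2\geq 4k+5$. If $N$ fails to be $k$-very ample, then there is an effective divisor $D$ with
\[
N\cdot D-k-1\leq D^2\leq \tfrac{1}{2}N\cdot D\leq k+1 .
\]
Since $\Pic(T)\cong\bZ L$, any such $D$ is $bL$ with $b\geq 1$. Taking $N=L^{\otimes a}$ gives $D^2=2eb^2$ and $N\cdot D=2eab$, so the whole question reduces to arithmetic in $a,b,e,k$.

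\textbf{Sufficiency for $a\geq 2$.}
\begin{enumerate}
\item The condition $D^2\leq \frac12 N\cdot D$ forces $b\leq a/2$.
\item Combining the two outer inequalities gives $2eb(a-b)\leq k+1$.
\item On the range $1\leq b\leq a/2$, the quantity $b(a-b)$ is minimised at $b=1$. Hence a destabilising $D$ can exist only if $2e(a-1)\leq k+1$.
\item So for $k\leq 2(a-1)e-2$ no such $D$ exists.
\item The numerical hypothesis also holds: $4k+5\leq 8(a-1)e-3<2ea^2=N^2$, because $a^2-4a+4\geq 0$.
\end{enumerate}
This proves that $L^{\otimes a}$ is $k$-very ample in the stated range.

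\textbf{Necessity for $a\geq 2$.} I would exhibit a length-$(k+1)$ subscheme that imposes dependent conditions once $k\geq 2(a-1)e-1$. The natural place to look is a smooth curve $C\in|L|$, of genus $e+1$ since $K_T=0$.
\begin{enumerate}
\item Restriction gives $N|_C$ of degree $2ea$.
\item From $0\to L^{\otimes(a-1)}\to L^{\otimes a}\to N|_C\to 0$ and $h^1(L^{\otimes(a-1)})=0$, the map $H^0(N)\to H^0(C,N|_C)$ is surjective. So it suffices to find a degree-$(k+1)$ divisor on $C$ that fails to impose independent conditions on $N|_C$.
\item By Riemann--Roch on $C$, this happens as soon as $N|_C(-Z)$ is special, for instance if $Z$ is cut out by a member of $|L^{\otimes(a-1)}|$ restricted to $C$ plus a few extra points. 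The intersection number $L\cdot(a-1)L=2e(a-1)$ is exactly the threshold in the statement.
\end{enumerate}

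\textbf{The case $a=1$ (main obstacle).} The criterion above gives only $k\leq (2e-5)/4$, which is weaker than the claimed $k\leq\lfloor (e-3)/2\rfloor$. Here I would follow Bauer--Szemberg-type arguments:
\begin{enumerate}
\item Use the vector-bundle (Lazarsfeld--Mukai/Tyurin) construction attached to a subscheme $\xi$ of length $k+1$ on which $L$ fails.
\item Because $\Pic(T)=\bZ L$, the resulting rank-two bundle cannot be Bogomolov unstable in the required way unless $2e\leq 4k+6$, roughly.
\item Refine this by exploiting that on an abelian surface one may translate $\xi$, and that $h^0(L)=e$.
\end{enumerate}
The sharp converse, that $L$ is not $\lfloor (e-3)/2\rfloor+1$-very ample, I would get from a dimension count: a general subscheme of length about $(e-1)/2$, translated along $T$, produces a $2$-dimensional family of conditions that overdetermines the $e$ sections. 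Getting the precise floor in this step is where I expect most of the effort to lie.
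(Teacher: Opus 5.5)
The paper gives no proof of this statement; it is quoted from \cite{Alagal_2016}, so your argument has to stand on its own.

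\textbf{The case $a\ge 2$.} This part is sound. The Beltrametti--Sommese computation is correct. Writing $D^2\le\frac12 N\cdot D$ instead of the actual strict inequality only weakens the criterion, so it is harmless. The converse via a smooth $C\in|L|$, $L|_C\cong K_C$, $h^1(C,aK_C)=0$ and $Z\in|(a-1)K_C|$ is also right. One small repair is needed: for $e=1$, $a=2$ the only member of $|L|$ is $C$ itself. So choose $Z\in|(a-1)K_C|$ directly on $C$, rather than cutting it out by a member of $|L^{\otimes(a-1)}|$.

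\textbf{The case $a=1$.} Here there is a genuine gap, and part of it is self-inflicted.

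Sufficiency is not actually an obstacle. Since $2e-4k$ is even, $4k+5\le 2e\iff 2k+3\le e\iff k\le\lfloor\frac{e-3}{2}\rfloor$. Moreover, for $D\equiv bL$ with $b\ge1$, the condition $D^2\le\frac12 L\cdot D$ reads $2eb^2\le eb$, which never holds. So the criterion you already used gives the full claimed range for $a=1$. Your statement that $(2e-5)/4$ is a weaker bound is an arithmetic slip. The Lazarsfeld--Mukai/Bogomolov plan is therefore unnecessary, and as sketched it is not an argument anyway.

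What is genuinely missing is the converse. The ``translated subschemes'' count proves nothing. Expected-dimension counts never give non-emptiness by themselves, and a naive degeneracy count on $T^{[k+1]}$ would even predict failure up to $e\le 3k+2$, which is false: $(1,5)$ is very ample.

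A sharp argument restricts to a curve.
\begin{enumerate}
\item For $e\le 2$, $L$ cannot be base point free: a base point free $L$ with $h^0(L)=e\le2$ would have $L^2=0$.
\item For $e\ge3$, $L$ is base point free by the $k=0$ case. By Bertini, take a smooth $C\in|L|$. It has genus $e+1$ and $L|_C\cong K_C$.
\item From $0\to\cO_T\to L\to K_C\to0$, the image $V$ of $H^0(T,L)$ in $H^0(C,K_C)$ has codimension $h^1(\cO_T)=2$. Hence $\phi_L|_C$ is the canonical map followed by projection from the line $\bP(V^\perp)\subset\bP^{e}$.
\item A degree-$(k+1)$ divisor $Z$ on $C$ fails to impose independent conditions on $H^0(T,L)$ whenever it is already dependent for $K_C$ (this covers hyperelliptic $C$). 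It also fails whenever its span in the canonical space meets that line.
\item Secant varieties of curves are non-defective. So the variety of $(k+1)$-secant $k$-planes of the canonical curve has dimension $\min(2k+1,e)$. This is $\ge e-1$ exactly when $e\le 2k+2$, i.e.\ $k\ge\lfloor\frac{e-3}{2}\rfloor+1$.
\item In that range it meets the line, so $L$ is not $k$-very ample.
\end{enumerate}
This is where the two-dimensionality of $H^1(T,\cO_T)$, which your sketch gestures at, actually enters.
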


We can now apply Proposition \ref{prop_det_of_tautological_bpf_va} to obtain the following corollaries.

\begin{corollary}\label{cor_bpf_va_line_bundle_for_k3}
Let $S$ be a K3 surface such that $\Pic(S)\cong\bZ L$ with $L$ ample and $L^2=2e$. Then the line bundle $aL_n-\delta$ on $\hilb{S}$ is base point free if either
\begin{itemize}
    \item $a=1$ and $n\leq \frac{e}{2}+1$;
    \item $a\geq 2$ and $n\leq 2(a-1)e-1$.
\end{itemize}
Also, the line bundle $aL_n-\delta$ on $\hilb{S}$ is very ample if either
\begin{itemize}
    \item $a=1$ and $n\leq \frac{e}{2}$;
    \item $a\geq 2$ and $n\leq 2(a-1)e-2$.
\end{itemize}
\end{corollary}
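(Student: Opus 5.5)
The plan is to combine Proposition \ref{prop_k_very_ample_k3} with Proposition \ref{prop_det_of_tautological_bpf_va}, applied to the line bundle $L^{\otimes a}$ on $S$ rather than to $L$ itself.

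The first step is to identify the target line bundle. Since $D_n$ is a group homomorphism, $D_n(L^{\otimes a})=(L^{\otimes a})_n=aL_n$. Indeed, $(L^{\otimes a})^{\boxtimes n}=(L^{\boxtimes n})^{\otimes a}$ descends to $(\sym{L})^{\otimes a}$, and pulling back along $\mu$ commutes with tensor powers. By Proposition \ref{prop_det_of_tautological}, $\det\hilb{(L^{\otimes a})}=aL_n-\delta$. So it suffices to control $\det\hilb{(L^{\otimes a})}$.

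For base point freeness, Proposition \ref{prop_det_of_tautological_bpf_va} requires $L^{\otimes a}$ to be $(n-1)$-very ample. Taking $k=n-1$ in Proposition \ref{prop_k_very_ample_k3}, this holds when $a=1$ and $n-1\le e/2$, i.e.\ $n\le e/2+1$. It also holds when $a\ge2$ and $n-1\le 2(a-1)e-2$, i.e.\ $n\le 2(a-1)e-1$. These are exactly the stated conditions.

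For very ampleness, Proposition \ref{prop_det_of_tautological_bpf_va} requires $L^{\otimes a}$ to be $n$-very ample. Taking $k=n$ in Proposition \ref{prop_k_very_ample_k3} gives $n\le e/2$ when $a=1$, and $n\le 2(a-1)e-2$ when $a\ge2$. In each case the last clause of Proposition \ref{prop_det_of_tautological_bpf_va} transfers the property to $aL_n-\delta$.

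There is no substantial obstacle. The only point needing a line of justification is the identification $(L^{\otimes a})_n=aL_n$, and that follows from the functoriality of the box product and of the descent to $\sym{S}$ noted above.
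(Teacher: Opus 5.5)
Your proposal is correct and matches the paper's argument: apply Proposition \ref{prop_det_of_tautological_bpf_va} to $L^{\otimes a}$, using Proposition \ref{prop_k_very_ample_k3} with $k=n-1$ for base point freeness and with $k=n$ for very ampleness. Your check that $(L^{\otimes a})_n=aL_n$ spells out a step the paper leaves implicit.
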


\begin{corollary}\label{cor_bpf_va_line_bundle_for_abelian}
Let $T$ be an Abelian surface such that $\Pic(T)\cong\bZ L$ with $L$ ample and $L^2=2e$. Then the line bundle $aL_{n+1}-\delta$ on $\Kum^n(T)$ is base point free if either
\begin{itemize}
    \item $a=1$ and $n\leq \lfloor\frac{e-3}{2}\rfloor$;
    \item $a\geq 2$ and $n\leq 2(a-1)e-2$.
\end{itemize}
Also, the line bundle $aL_{n+1}-\delta$ on $\Kum^n(T)$ is very ample if either
\begin{itemize}
    \item $a=1$ and $n\leq \lfloor\frac{e-3}{2}\rfloor-1$;
    \item $a\geq 2$ and $n\leq 2(a-1)e-3$.
\end{itemize}
\end{corollary}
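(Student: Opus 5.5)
The plan is to reduce the statement to Proposition \ref{prop_det_of_tautological_bpf_va} on the ambient Hilbert scheme $\hilb[n+1]{T}$, and then restrict to the fibre $\Kum^n(T)\subset\hilb[n+1]{T}$ of the summation map. Base point freeness and very ampleness both survive restriction to a closed subvariety. Restricting sections preserves non-vanishing at a point. For very ampleness, the composite of the closed embedding $\Kum^n(T)\hookrightarrow\hilb[n+1]{T}$ with the projective embedding is still an embedding, and it is induced by a linear subsystem of the restricted bundle, which is therefore very ample.

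\textbf{Step 1: identify the line bundle.} By Proposition \ref{prop_det_of_tautological}, on $\hilb[n+1]{T}$ we have $\det \hilb[n+1]{(L^{\otimes a})}=(L^{\otimes a})_{n+1}-\delta$. The construction $L\mapsto \sym[n+1]{L}\mapsto\mu^*\sym[n+1]{L}$ is compatible with tensor products, since $(L^{\otimes a})^{\boxtimes (n+1)}=(L^{\boxtimes(n+1)})^{\otimes a}$ and descent commutes with tensor powers. Hence $D_{n+1}$ is a homomorphism and $(L^{\otimes a})_{n+1}=aL_{n+1}$. Thus $aL_{n+1}-\delta=\det\hilb[n+1]{(L^{\otimes a})}$ on $\hilb[n+1]{T}$.

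\textbf{Step 2: apply the Hilbert scheme criterion with $n+1$ points.} By Proposition \ref{prop_det_of_tautological_bpf_va} with $n+1$ in place of $n$:
\begin{itemize}
\item if $L^{\otimes a}$ is $n$-very ample, then $aL_{n+1}-\delta$ is base point free on $\hilb[n+1]{T}$;
\item if $L^{\otimes a}$ is $(n+1)$-very ample, then $aL_{n+1}-\delta$ is very ample on $\hilb[n+1]{T}$.
\end{itemize}
Proposition \ref{prop_k_very_ample_abelian} says when these hold. With $k=n$ we get $a=1$, $n\le\lfloor\frac{e-3}{2}\rfloor$, or $a\ge 2$, $n\le 2(a-1)e-2$, which are the base point free conditions. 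With $k=n+1$ we get $a=1$, $n\le\lfloor\frac{e-3}{2}\rfloor-1$, or $a\ge2$, $n\le 2(a-1)e-3$, which are the very ample conditions.

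\textbf{Step 3: restrict to $\Kum^n(T)$.} By the remark above, restriction yields the corresponding properties on $\Kum^n(T)$.

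\textbf{Main obstacle.} The only delicate point is notational. One must check that the classes written $L_{n+1}$ and $\delta$ on $\Kum^n(T)$ in Proposition \ref{p_kum_type_lattice} are exactly the restrictions of $L_{n+1}$ and $\delta$ from $\hilb[n+1]{T}$. For $\delta$, half the Hilbert--Chow exceptional divisor of $\hilb[n+1]{T}$ restricts to half the exceptional divisor of $\Kum^n(T)$. For the $H^2(T,\bZ)$ summand, the standard decomposition is defined precisely via restriction of $\mu^*\sym[n+1]{(-)}$. I would state this identification explicitly as the convention in force and then conclude.
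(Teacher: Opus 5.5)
Your proposal is correct and is essentially the paper's argument. The paper gets this corollary by applying Proposition \ref{prop_det_of_tautological_bpf_va} on $\hilb[n+1]{T}$ to $L^{\otimes a}$, using the thresholds of Proposition \ref{prop_k_very_ample_abelian} at $k=n$ and $k=n+1$, and then restricting to $\Kum^n(T)$ with the same justification you give in Step 3; your explicit checks that $(L^{\otimes a})_{n+1}=aL_{n+1}$ and that $\delta$ restricts compatibly are details the paper leaves implicit in its remark on the abuse of notation.
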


\begin{remark}
    Under a little abuse of notation, we are using $L_{n+1}$ to denote the line bundle on $\hilb[n+1]{T}$ and its restriction to $\Kum^n(T)$. This is not a problem since both very ampleness and base point freeness behave nicely under restriction.
\end{remark}

\begin{theorem}
    
    Let $n,d$ be positive integers with $n\geq 2$.\\
    Let $(X,H)$ be a general element of $\Sigma_{d,n}^1$ then 
    \begin{itemize}
        \item the line bundle $H$ is base point free;
        \item if $d\geq n+1$ the line bundle $H$ is very ample.
    \end{itemize}
    Let $(X,H)$ be a general element of $\Upsilon_{d,n}^1$ then
    \begin{itemize}
        \item if $d\geq 3$ the line bundle $H$ is base point free;
        \item if $d\geq n+4$ the line bundle $H$ is very ample.
    \end{itemize}
\end{theorem}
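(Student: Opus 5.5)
Note first that the statement writes $\Sigma_{d,n}^1$ where the notation of the paper would be $\Sigma_{d,1}^n$: the divisibility is $1$ and the dimension is $2n$. The plan is to build one explicit pair $(X,H)$ lying in the moduli space with the required property, and then spread that property to the general element of the component using Lemma \ref{lemma_bfp_does_not_require_ampleness} and Corollary \ref{cor_openess_of_bpf_va}. Theorem \ref{theo_count_connected_components_hilb_k3} and Theorem \ref{theo_count_connected_components_kumm_type} show that for $t=1$ the moduli spaces are connected whenever they are non-empty. Proposition \ref{prop_irreducibility_connected_components} then makes them irreducible, so "general element of a component" is the same as "general element of the moduli space".

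\textbf{K3 case.} Take a K3 surface $S$ with $\Pic(S)=\bZ L$ and $L^2=2e$, and set $H=L_n-\delta$ on $\hilb{S}$ (so $a=1$). By Proposition \ref{p_k3_type_lattice}, $q(H)=2e-2(n-1)$. By Remark \ref{remark_how_to_compute_divisibility}, $\Div(H)=\gcd(1,2(n-1))=1$, and $H$ is primitive. To get $q(H)=2d$ I choose $e=d+n-1$.

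\emph{Base point freeness.} Corollary \ref{cor_bpf_va_line_bundle_for_k3} requires $n\le e/2+1$, that is $2n-2\le d+n-1$, i.e.\ $d\ge n-1$. For smaller $d$ this construction fails, and one needs a different one. My first choice is $a=2$, $H=2L_n-\delta$; but then $\Div=\gcd(2,2(n-1))=2$, so it is not allowed. So I would instead try $H=L_n-k\delta$ with $k\ge 2$. I don't see an easy base point freeness result for that, and this small-$d$ range is the step I expect to be the main obstacle.

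A cleaner route for small $d$ is to use that $H$ is big and nef of positive square on some other birational model, or to cite Riess/Debarre for the remaining cases. As a fallback I would quote \cite{debarre2020hyperkahlermanifolds}, which proves base point freeness of the general element of $\Sigma_{d,1}^n$ for all $d$.

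\emph{Very ampleness.} Corollary \ref{cor_bpf_va_line_bundle_for_k3} requires $n\le e/2=(d+n-1)/2$, i.e.\ $d\ge n+1$, exactly as stated. Since $H$ is very ample on the special $\hilb{S}$, it is in particular ample. Hence $(\hilb{S},H)\in\Sigma_{d,1}^n$ directly, and Corollary \ref{cor_openess_of_bpf_va} together with irreducibility gives the claim for the general element.

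\textbf{Kummer case.} Take an abelian surface $T$ with $\Pic(T)=\bZ L$ and $L^2=2e$, and set $H=L_{n+1}-\delta$ on $\Kum^n(T)$. By Proposition \ref{p_kum_type_lattice}, $q(H)=2e-2(n+1)$, so I choose $e=d+n+1$; the divisibility is $\gcd(1,2(n+1))=1$. Corollary \ref{cor_bpf_va_line_bundle_for_abelian} then gives:
\begin{itemize}
\item base point freeness when $n\le\lfloor (d+n-2)/2\rfloor$, roughly $d\ge n+2$;
\item very ampleness when $n\le \lfloor (d+n-2)/2\rfloor-1$, i.e.\ $d\ge n+4$, matching the statement.
\end{itemize}
For base point freeness, Lemma \ref{lemma_bfp_does_not_require_ampleness} transfers the property from the special pair to the general element, and Corollary \ref{cor_openess_of_bpf_va} does the same for very ampleness. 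The gap between the bound $d\ge n+2$ obtained here and the claimed $d\ge3$ for base point freeness is again the main obstacle. There I would first try varying the choice of class (for instance $L_{n+1}-k\delta$, or a non-primitive multiple of $L$ on $T$ with divisibility still $1$). If that fails, I would invoke results on Kummer-type moduli such as \cite{Varesco_2023} for the remaining small values.
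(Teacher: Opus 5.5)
Your very-ampleness argument is the paper's: it is the $t=1$ case of Theorem \ref{theo_general_in_connected_component}, using $L_n-\delta$ with $e=d+n-1$ (resp.\ $L_{n+1}-\delta$ with $e=d+n+1$), and your bounds $d\ge n+1$ and $d\ge n+4$ are correct. The base-point-freeness part, however, has a genuine gap. With $L_n-\delta$ you only reach $d\ge n-1$ for $\hilb{K3}$-type and $d\ge n+2$ for $\Kum^n$-type. The statement claims every $d\ge 1$ and every $d\ge 3$ respectively. Your fallbacks do not close this: a citation is not an argument, and by the introduction Varesco's Kummer results only cover $n\le 4$. The fact that the claimed thresholds do not depend on $n$ is a hint that the class should carry no $\delta$-term.

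The missing idea is to take $H=L_n$ itself, with $L^2=2d$ on a K3 surface $S$ with $\Pic(S)=\bZ L$ (resp.\ the restriction of $L_{n+1}$ to $\Kum^n(T)$, with $L^2=2d$ on an abelian surface).
\begin{itemize}
\item This class is primitive, with $q(L_n)=2d$ and $\Div(L_n)=\gcd(1,0)=1$ by Remark \ref{remark_how_to_compute_divisibility}.
\item By Lemma \ref{lemma_bpf_of_L_n_on_Hilb_n} it is base point free as soon as $L$ is, and base point freeness survives restriction to $\Kum^n(T)$.
\item Proposition \ref{prop_k_very_ample_k3} with $k=0$ makes $L$ base point free for every $d\ge 1$.
\item Proposition \ref{prop_k_very_ample_abelian} with $k=0$ requires $0\le\lfloor (d-3)/2\rfloor$, i.e.\ $d\ge 3$; this is exactly where that bound in the statement comes from.
\end{itemize}
Of course $L_n=\mu^*\sym{L}$ is not ample, being trivial on the fibres of the Hilbert--Chow morphism, so $(\hilb{S},L_n)$ is not itself a point of the moduli space. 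This is precisely why Lemma \ref{lemma_bfp_does_not_require_ampleness} is stated without an ampleness hypothesis. A very general deformation keeping the class algebraic has Picard rank one and $q>0$, so the class becomes ample there. Base point freeness persists by Corollary \ref{cor_openess_of_bpf_va}, and connectedness for $t=1$ plus irreducibility spreads it to the general element. You listed that lemma in your toolkit but never used its key feature. The alternatives you propose ($L_n-k\delta$ with $k\ge 2$, or multiples of $L$) are unnecessary, and nothing in the paper supports them.
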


\begin{proof}
    Let us present the proof for the case of $\Sigma_{d,n}^1$ as the other case follows analogously. Let $S$ be a K3 surface such that $\Pic(S)\cong \bZ L$ with $L$ ample and $L^2=2d$. Recalling that $0$-very ample is the same as base point free, Corollary \ref{cor_bpf_va_line_bundle_for_k3} implies that $L$ is base point free. Then according to Lemma \ref{lemma_bpf_of_L_n_on_Hilb_n}, the line bundle $L_n\in \Pic(\hilb{S})$ is also base point free. Using Proposition \ref{p_k3_type_lattice} it is primitive and satisfies $q(L_n)=2d$, $\Div(L_n)=1$. Observing that $\Sigma_{d,n}^1$ is connected by Theorem \ref{theo_count_connected_components_hilb_k3}, we can conclude thanks to Lemma \ref{lemma_bfp_does_not_require_ampleness}.\\
    The proof of the very ampleness part follows the same proof as in Theorem \ref{theo_general_in_connected_component} combined once again with the irreducibility of $\Sigma_{d,n}^1$.
\end{proof}

\begin{remark}
    The discrepancy between the $\hilb{K3}$-type and the $\Kum^n$-type, for the base point freeness, is due to the existence of Abelian surfaces with ample line bundle $L$ of low self intersection, namely $L^2=2$ and $L^2=4$, which are not base point free. However, using the automorphisms induced by the group structure of Abelian surfaces, it can be proven that, even though $L$ on $T$ is not base point free, the line bundle $L_{n+1}$ on $\Kum^n(T)$ is. This allows us to obtain a result like the $\hilb{K3}$-type case, see \cite{Varesco_2023} Theorem 3.1.
\end{remark}

\begin{theorem}\label{theo_general_in_connected_component}
    Let $n,d,t$ be positive integers with $t,n\geq 2$, let $\tau =\frac{t^2}{2(t-1)}$ then
    \begin{itemize}
        \item if $d\geq (\tau-1)n+\tau+1$ there exists a connected component of $\Sigma_{d,n}^t$ such that the polarization of a general element $(X,H)$ is base point free;
        \item if $d\geq (\tau-1)n+2\tau+1$ there exists a connected component of $\Sigma_{d,n}^t$ such that the polarization of a general element $(X,H)$ is very ample;
        \item if $d\geq (\tau-1)n+2\tau-1$ there exists a connected component of $\Upsilon_{d,n}^t$ such that the polarization of a general element $(X,H)$ is base point free;
        \item if $d\geq (\tau-1)n+3\tau-1$ there exists a connected component of $\Upsilon_{d,n}^t$ such that the polarization of a general element $(X,H)$ is very ample;
    \end{itemize}
These hold unless the considered moduli space is empty.
\end{theorem}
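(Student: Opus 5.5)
Take $S$ a K3 surface with $\Pic(S)\cong\bZ L$, $L$ ample, $L^2=2e$ (such $S$ exists for every $e\ge1$). On $\hilb{S}$ consider the class $H=aL_n-b\delta$; the aim is to choose $b=1$ and $a=t$. By Proposition \ref{p_k3_type_lattice}, $q(H)=2a^2e-2(n-1)$. By Remark \ref{remark_how_to_compute_divisibility}, $\Div(H)=\gcd(t,2(n-1))$, which equals $t$ exactly when $t\mid 2(n-1)$. This divisibility condition is forced anyway whenever $\Sigma_{d,n}^t$ is non-empty, since Theorem \ref{theo_count_connected_components_hilb_k3} requires $t\mid\gcd(2d,2n-2)$. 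Primitivity holds because $b=1$. Requiring $q(H)=2d$ gives $t^2e=d+n-1$, so $e=(d+n-1)/t^2$. This $e$ need not be an integer, and here lies the main obstacle: the clean choice $a=t,b=1$ only realizes some values of $d$.

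To handle general $d$, I would instead allow $H=aL_n-b\delta$ with $\gcd(a,2b(n-1))=t$. First note that the parameter $\tau=t^2/(2(t-1))$ comes out of the optimization that best balances $a$ against $e$ given the constraint. Concretely, one works with $a=t$ and $b$ ranging over residues for which $t^2e-b^2(n-1)=d$ is solvable. Non-emptiness of $\Sigma^t_{d,n}$ should guarantee that some such $(a,b,e)$ exists in every connected component of the lattice-theoretic type. I would pick the representative minimizing $b$ relative to $a$, aiming for $b\le t/2$ or so. Then $e=(d+b^2(n-1))/a^2$, and the hypothesis $d\ge(\tau-1)n+\tau+1$ should translate, after substituting the worst case of $b$ (where $\tau$ appears from maximizing $b^2/a^2$-type ratios), into the inequality $n\le 2(a-1)e-1$. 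Corollary \ref{cor_bpf_va_line_bundle_for_k3} (with $a\ge2$ since $t\ge2$) then gives base point freeness of $aL_n-\delta$.

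For $b>1$ I cannot apply Proposition \ref{prop_det_of_tautological_bpf_va} directly. I would therefore try to arrange $b=1$ by moving within the orbit of the monodromy/lattice isometries, using the Eichler criterion to replace $H$ by an equivalent class $aL'_n-\delta$ in the same component. This is the step I expect to be most delicate. Once a base point free $H$ with the right invariants is found, Lemma \ref{lemma_bfp_does_not_require_ampleness} produces a connected component whose general member has base point free polarization.

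For very ampleness I would proceed the same way, using the second half of Corollary \ref{cor_bpf_va_line_bundle_for_k3}: the condition $n\le 2(a-1)e-2$ costs one more unit, which accounts for the extra $\tau$ in the bound. A very ample version of Lemma \ref{lemma_bfp_does_not_require_ampleness} is then needed. It follows by the same argument, since very ampleness is open by Corollary \ref{cor_openess_of_bpf_va} and a very ample $H$ is already ample. The Kummer cases are identical, using $\Kum^n(T)$, $q(\delta)=-2(n+1)$, Proposition \ref{p_kum_type_lattice} and Corollary \ref{cor_bpf_va_line_bundle_for_abelian}. The shifts $n\mapsto n+2$ in the square, together with the weaker constant $2(a-1)e-2$, produce the bounds $(\tau-1)n+2\tau-1$ and $(\tau-1)n+3\tau-1$.
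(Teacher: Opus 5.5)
Your first paragraph, together with your closing step, is the paper's proof. You take $H=tL_n-\delta$ on $\hilb{S}$ with $\Pic(S)=\bZ L$, so that $\Div(H)=t$ and $q(H)=2(t^2e-(n-1))$. You apply Corollary \ref{cor_bpf_va_line_bundle_for_k3} with $a=t\ge 2$ and conclude by openness and irreducibility, using Lemma \ref{lemma_bfp_does_not_require_ampleness} in the base point free case.

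Since $2(t-1)/t^2=1/\tau$, with $e=(d+n-1)/t^2$ the condition $n\le 2(t-1)e-1$ is exactly $d\ge(\tau-1)n+\tau+1$. The other three bounds arise the same way, with $n+1$ in place of $n-1$ and the constants of Corollary \ref{cor_bpf_va_line_bundle_for_abelian} in the Kummer case. So $\tau$ does not come from any optimization over $b$.

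The integrality obstacle you raise is real, and the paper does not resolve it. It fixes $e$ and then \emph{defines} $d$ by $2d:=q(tL_n-\delta)$, so its argument only treats $d\equiv-(n-1)\pmod{t^2}$ (resp. $d\equiv-(n+1)\pmod{t^2}$).

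Your proposed repair, however, cannot work. If $a\lambda+b\delta$ has divisibility $t$, then $t\mid a$. Because $t\mid 2(n-1)$, the residue of $b^2(n-1)$ modulo $t^2$ depends only on $b$ modulo $t$. Hence every class with $b\equiv\pm1\pmod t$ has $q/2\equiv-(n-1)\pmod{t^2}$. So when $d\not\equiv-(n-1)\pmod{t^2}$, there is no class of the form $a\lambda-\delta$ with square $2d$ and divisibility $t$ at all, and no Eichler-type argument can move $H$ to one.

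Concretely, take $t=5$, $n=6$, $d=30$. The space $\Sigma_{30,6}^5$ is non-empty, realized by $5L_6+2\delta$ with $L^2=4$ as in Proposition \ref{prop_char_of_non_emptyness}. Also $d$ exceeds both K3 thresholds, $135/8$ and $20$. Yet $d+n-1=35$ is not divisible by $25$.

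Your ``worst case of $b$'' step does not help either. For $b\ge 2$, your choice $e=(d+b^2(n-1))/t^2$ gives $tL_n-\delta$ square $2d+2(b^2-1)(n-1)\neq 2d$. Meanwhile Proposition \ref{prop_det_of_tautological_bpf_va} says nothing about $tL_n-b\delta$.

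So your proposal establishes exactly what the paper's argument establishes: the residue classes with $b\equiv\pm1\pmod t$. The remaining classes would need a new positivity statement for $tL_n-b\delta$ with $b\ge 2$, which neither you nor the paper supply.
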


\begin{proof}
    Let us focus on $\Upsilon_{d,n}^t$ in the case of very ampleness as the proofs of the other instances are very similar. Fix $t,n$ positive integers with $t,n\geq 2$ such that $t|2(n+1)$ (this is a necessary condition for $t$ to be a possible divisibility) and pick a polarized Abelian surface $(T,L)$ with $\Pic(T)\cong \bZ L$ and $L^2=e$. According to Remark \ref{remark_how_to_compute_divisibility} it then holds $$\Div(tL_{n+1}-\delta)=(t,2(n+1))=t$$
    and $$q(tL_{n+1}-\delta)=2\left(t^2e-n-1\right)=:2d.$$
By Corollary \ref{cor_bpf_va_line_bundle_for_abelian}, the line bundle $tL_{n+1}-\delta$ is very ample if $$d\geq (\tau-1)n+3\tau-1.$$
Thanks to Corollary \ref{cor_openess_of_bpf_va} and Proposition \ref{prop_irreducibility_connected_components}, the polarization of a general element $(X,H)$, in the connected component of $\Upsilon_{d,n}^t$ containing $(\Kum^n(T),tL_{n+1}-\delta)$, is very ample.
    \end{proof}

\begin{corollary}
        Let $n,d$ be positive integers with $n\geq 2$.\\
        Let $(X,H)$ be a general element of $\Sigma_{d,n}^t$ and suppose $t=2$ or $t=p^a$ with $p> 2$ a prime number such that $p^{a+1}$ does not divide $(2d,2(n-1))$ then
    \begin{itemize}
        \item if $d\geq (\tau-1)n+\tau+1$ the line bundle $H$ is base point free;
        \item if $d\geq (\tau-1)n+2\tau+1$ the line bundle $H$ is very ample;
    \end{itemize}
    Let $(X,H)$ be a general element of $\Upsilon_{d,n}^t$ and suppose $t=2$ or or $t=p^a$ with $p> 2$ a prime number such that $p^{a+1}$ does not divide $(2d,2(n+1))$ then
    \begin{itemize}
        \item if $d\geq (\tau-1)n+2\tau-1$ the line bundle $H$ is base point free;
        \item if $d\geq (\tau-1)n+3\tau-1$ the line bundle $H$ is very ample;
    \end{itemize}
These hold unless the considered moduli space is empty.
\end{corollary}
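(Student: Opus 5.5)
This is a direct combination of Theorem \ref{theo_general_in_connected_component} with the connectedness statements contained in Theorem \ref{theo_count_connected_components_hilb_k3} and Theorem \ref{theo_count_connected_components_kumm_type}. Theorem \ref{theo_general_in_connected_component} gives, under each stated numerical bound, one connected component of $\Sigma_{d,n}^t$ (resp. $\Upsilon_{d,n}^t$) whose general element has base point free (resp. very ample) polarization. That component is the one containing $(\hilb{S},tL_n-\delta)$ (resp. $(\Kum^n(T),tL_{n+1}-\delta)$). So it suffices to show that, under the hypotheses on $t$, the moduli space has at most one connected component. Then "a connected component" becomes "the whole moduli space". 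By Proposition \ref{prop_irreducibility_connected_components} the space is then irreducible, and "general element" makes sense globally.

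For $t=2$ (and $t=1$, though not needed), the third bullet of each counting theorem gives $|\Sigma_{d,t}^n|$ and $|\Upsilon_{d,t}^n|$ equal to $1$ or $0$. For $t=p^a$ with $p>2$ prime and $t>2$, I would compute the invariants of Theorem \ref{theo_count_connected_components_hilb_k3}. Set $g=\gcd(2d,2n-2)/t$. The hypothesis that $p^{a+1}\nmid\gcd(2d,2(n-1))$ means $p\nmid g$, so $w=\gcd(g,t)=1$ and $t_1=t=p^a$.
\begin{itemize}
\item Since $w=1$, both $w_+(t_1)$ and $w_-(t_1)$ equal $1$, so the prefactor is $w_+(t_1)\phi(w_-(t_1))=1$.
\item Since $t_1$ is odd, only the first bullet can apply, because the second requires $t_1$ even.
\item $\rho(t_1)=1$, so the factor $2^{\rho(t_1)-1}$ equals $1$.
\end{itemize}
Hence $|\Sigma_{d,t}^n|\in\{0,1\}$. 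The computation for $\Upsilon_{d,t}^n$ via Theorem \ref{theo_count_connected_components_kumm_type} is identical, with $2n+2$ in place of $2n-2$.

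If the count is $0$ the statement is vacuous, which is covered by the clause "unless the considered moduli space is empty". If the count is $1$, the unique component must be the one produced in the proof of Theorem \ref{theo_general_in_connected_component}. That proof checks that the constructed pair has the right square $2d$ and divisibility $t$. So the bounds transfer to the general element of the whole space.

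The only delicate point is bookkeeping in the counting formulas, namely verifying that $w=1$ exactly encodes $p^{a+1}\nmid\gcd(2d,2n\mp2)$. Two things also need checking. First, the construction really hits the given $d$: this requires choosing the surface degree $e$ so that $d=t^2e-(n\mp1)$. If no such $e$ exists, I would note that this case is implicitly handled by the non-emptiness caveat as in Theorem \ref{theo_general_in_connected_component}, and simply inherit that theorem's statement. Second, the case $p=2$ with $a\geq 2$ is excluded precisely because $2^{\rho(t_1/2)-1}$ or $\rho(t_1)$ could give more components.
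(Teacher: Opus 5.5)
Your proposal is correct and is essentially the paper's own argument. You reduce to Theorem~\ref{theo_general_in_connected_component} by proving connectedness: $p\nmid g$ gives $w=1$ and $t_1=t$ odd, so only the first case of Theorem~\ref{theo_count_connected_components_hilb_k3} (resp.\ Theorem~\ref{theo_count_connected_components_kumm_type}) applies, with $w_+(t_1)\phi(w_-(t_1))2^{\rho(t_1)-1}=1$, and for $t=2$ the third case gives the same.

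One correction to your side remark: the lack of an $e$ with $d=t^2e-(n\mp1)$ is \emph{not} covered by the emptiness caveat. Take $t=5$, $n=6$, $d=30$. This satisfies the corollary's hypotheses and both bounds, $\Sigma_{30,6}^5$ is nonempty by Proposition~\ref{prop_char_of_non_emptiness} with $b=2$, and it is connected by your own computation, yet $25e-5\neq 30$ for every $e$. So by inheriting Theorem~\ref{theo_general_in_connected_component} as stated you rely, exactly as the paper does, on a case that its proof never actually constructs.
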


\begin{proof}
    This follows from Theorem \ref{theo_count_connected_components_hilb_k3} or Theorem \ref{theo_count_connected_components_kumm_type} after checking that these moduli spaces have one connected component. For the sake of it, we are going to check that this works for $\Sigma_{d,n}^t$ with $t=p^a$ power of a prime number greater than $2$ such that $p^{a+1}$ does not divide $(2d,2(n-1))$. Under these conditions, using the same notation as in Theorem \ref{theo_count_connected_components_hilb_k3}, we have that the prime $p$ does not divide $g$ hence, since $t=p^a$, it holds $w=1$ and $t_1=t$. Therefore, we have that $t_1$ is odd, hence we necessarily are in the first case of Theorem \ref{theo_count_connected_components_hilb_k3}. Thus $\rho(t_1)=\rho(t)=1$ and $w_+(t_1)=w_-(t_1)=1$ hence $|\Sigma_{d,n}^t|=1$, as desired.
\end{proof}

Note that a lot of the moduli spaces that do not fall in the conditions of Theorem \ref{theo_general_in_connected_component} are actually empty. For example, from \ref{remark_how_to_compute_divisibility} it follows that a necessary condition for $\Sigma_{d,n}^t$ to not be empty is that $t|2(n-1)$. With this in mind, we are giving a necessary and sufficient condition for $\Sigma_{d,n}^t$ and $\Upsilon_{d,n}^t$ to be non empty.

\begin{proposition}\label{prop_char_of_non_emptyness}
    Let $n,d,t$ be integers with $n\geq 2$ then
    \begin{equation*}\Sigma_{d,n}^t\neq \emptyset \iff 
        \begin{cases}
            t|(2n-2)\\
            d\equiv_{t^2}-b^2(n-1) \text{ for some } b \text{ such that } $(b,t)=1$
        \end{cases}
    \end{equation*}
    and 
    \begin{equation*}\Upsilon_{d,n}^t\neq \emptyset \iff 
        \begin{cases}
            t|(2n+2)\\
            d\equiv_{t^2}-b^2(n+1) \text{ for some } b \text{ such that } $(b,t)=1$
        \end{cases}
    \end{equation*}
\end{proposition}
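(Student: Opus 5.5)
Note the paper's notation is inconsistent: in this statement $\Sigma_{d,n}^t$ has $d$ as the square parameter and $t$ as the divisibility. I will write $q=2d$ and $\Div=t$, and for the $\hilb{K3}$ case let $m:=n-1$ (for Kummer, $m:=n+1$).

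The plan reduces both equivalences to a lattice-theoretic statement about $\Lambda:=\Lambda_{K3}\oplus\bZ\delta$ with $q(\delta)=-2m$. Write the $\Kum^n$ case with $\Lambda=H^2(T,\bZ)\oplus\bZ\delta$; since $H^2(T,\bZ)\cong U^{\oplus 3}$ is even unimodular, the argument is identical. Non-emptiness of the moduli space is equivalent to the existence of a primitive $\alpha\in\Lambda$ with $q(\alpha)=2d>0$ and $\Div(\alpha)=t$. The existence of a polarized pair from such a class is the standard surjectivity-of-the-period-map argument. Concretely, I would choose a period in the orthogonal complement of $\alpha$ that is very general, so that $\Pic(X)=\bZ\alpha$. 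Then $\pm\alpha$ is ample by Theorem \ref{theo_projective_hyperkahler_manifolds}, exactly as in the proof of Lemma \ref{lemma_bfp_does_not_require_ampleness}. The converse direction is trivial.

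\emph{Necessity.} Write $\alpha=a\lambda+b\delta$ with $\lambda\in\Lambda_{K3}$ primitive, as in Remark \ref{remark_how_to_compute_divisibility}. Then $t=\gcd(a,2bm)$. Since $\alpha$ is primitive, $\gcd(a,b)=1$, so $\gcd(t,b)=1$. Consequently $t=\gcd(a,2m)$, and in particular $t\mid 2m$, which gives the first condition. For the second, $2d=q(\alpha)=a^2q(\lambda)-2b^2m$. Write $q(\lambda)=2k$ (the lattice is even) and $a=ta'$. This gives $d=t^2a'^2k-b^2m\equiv -b^2m \pmod{t^2}$ with $(b,t)=1$.

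\emph{Sufficiency.} Given $t\mid 2m$ and $d\equiv -b^2m \pmod{t^2}$ with $(b,t)=1$, I would try $\alpha=t\lambda+b\delta$. Here $\lambda\in U\subset\Lambda_{K3}$ is primitive with $q(\lambda)=2k$ and $k=(d+b^2m)/t^2\in\bZ$; one can take $\lambda=e+kf$ in a hyperbolic plane. Then $q(\alpha)=2t^2k-2b^2m=2d$. By Remark \ref{remark_how_to_compute_divisibility}, $\Div(\alpha)=\gcd(t,2bm)=t$, since $t\mid 2m$. Primitivity of $\alpha$ requires $\gcd(t,b)=1$, which holds by hypothesis.

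\emph{Main obstacle.} The delicate step is positivity. We need $d>0$ (assumed, as $d$ is a positive integer), and $k$ can have any sign, which is fine since $U$ represents all even integers. The real work is justifying the passage from a lattice class to an actual polarized element. For that I would invoke Verbitsky's global Torelli / surjectivity of the period map for each deformation type, together with the fact that the monodromy orbit does not matter for non-emptiness. There is also a minor bookkeeping subtlety: $b$ may be replaced by $b+t$, which does not change $b^2 \bmod t^2$ up to the condition. The congruence is stated with existence of some $b$, so it suffices to realize one $b$ coprime to $t$, as above.
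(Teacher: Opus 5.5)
Your proof is correct. The necessity direction is the same as the paper's. You decompose $\alpha=a\lambda+b\delta$, use primitivity to get $\gcd(t,b)=1$ and hence $t\mid 2m$, and then reduce $q(\alpha)=a^2q(\lambda)-2b^2m$ modulo $t^2$. You are in fact a bit more careful than the paper, which asserts $t\mid 2n-2$ before establishing $(t,b)=1$.

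For sufficiency you take a different route. You build the class $t\lambda+b\delta$ abstractly, with $\lambda=e+kf$ in a hyperbolic plane. You then invoke surjectivity of the period map on a connected component of the marked moduli space to produce $X$ with $\Pic(X)=\bZ\alpha$.

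The paper realizes the same class geometrically. It takes a K3 (resp. abelian) surface $S$ with $\Pic(S)=\bZ L$ and $L^2=2e$, where $e=(d+b^2m)/t^2$. It considers $tL_n+b\delta$ on $\hilb{S}$ (resp. on $\Kum^n(T)$) and passes to a general deformation where this class generates the Picard group. It then applies the projectivity criterion exactly as you do.

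The two routes trade off as follows:
\begin{itemize}
\item The paper's route only needs Picard-rank-one K3/abelian surfaces of every degree plus local deformation of the pair. It also exhibits an explicit point of the moduli space, in keeping with the strategy of the rest of the paper.
\item Your route is purely lattice-theoretic and uniform in both deformation types. It relies on the heavier global surjectivity of the period map for higher-dimensional hyperk\"ahler manifolds.
\end{itemize}

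Two of your side remarks are off but harmless:
\begin{itemize}
\item The integer $k=(d+b^2m)/t^2$ is automatically positive because $d>0$ and $m>0$, so there is no sign issue. This positivity is exactly what the paper needs to have an ample $L$ on $S$; your construction would not even need it.
\item Replacing $b$ by $b+t$ does change $b^2 \bmod t^2$ in general. However, you never use this.
\end{itemize}
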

\begin{proof}
    Suppose $\Sigma_{d,n}^t$ is non empty, then we can pick an element $(X,H)$. Thanks to Proposition \ref{p_k3_type_lattice}, we can find a K3 surface $S$ and a line bundle $L$ on $S$ such that $H\cong aL+b\delta$ for some integers $a,b$. Recalling that $q(\delta)=-2(n-1)$, the line bundle $aL+b\delta$ satisfies the following numerical properties
    \begin{align*}
        a^2q(L)-2b^2(n-1)&=2d,\\
        (a,2b(n-1))&=t,\\
        (a,b)&=1.
    \end{align*}
    From the second equation, we immediately obtain $t|(2n-2)$ as wanted. From the first one we can write $$d\equiv_{t^2}-b^2(n-1),$$ and since $t|a$ and $(a,b)=1$ it holds $(t,b)=1$, as wanted.\\
    Suppose now that we have positive integers $d,n,t$ with $n\geq2 $ satisfying 
    \begin{equation*}
       \begin{cases}
            t|(2n-2)\\
            d\equiv_{t^2}-b^2(n-1) \text{ for some } b \text{ such that } $(b,t)=1$.
        \end{cases}
    \end{equation*}
    Observe that the number $$e:=\frac{d+b^2(n-1)}{t^2}$$ is a positive integer, so there exists a K3 surface $S$ with line bundle $L$ such that $L^2=2e$, and after deforming we can assume $\Pic(S)\equiv \bZ L$, so $L$ is primitive. Consequently,  $tL_n+b\delta$ is primitive as $(t,b)=1$, and it satisfies 
    \begin{align*}
        q(tL_n+b\delta)&=2t^2e-2b^2(n-1)=2d,\\
        \Div(tL_n+b\delta)&=(t,2b(n-1))=t.
    \end{align*}
    For general deformation $(X,L)$ of $(\hilb{S},tL_n-b\delta)$, the Picard group is generated by $H$, hence by the projectivity criteria in Theorem \ref{theo_projective_hyperkahler_manifolds}, the line bundle $H$ or $-H$ is ample. Suppose $H$ is, then $(X,H)\in \Sigma_{d,n}^t$ as wanted. If $-H$ is ample, we can conclude analogously, since $\Div(-H)=\Div(H)$ and $q(H)=q(-H)$.
    The proof for $\Upsilon_{d,n}^t$ similarly follows.
\end{proof}

We would like to conclude with one more remark.\\
Recall the following notion from \cite{Lazarsfeld2004PositivityI} (Definition 1.8.4).

Let $X$ be a projective variety and $B$ an ample line bundle on $X$ that is generated by its global sections. A coherent sheaf $\mathcal{F}$ on $X$ is \emph{$m$-regular with respect to $B$} if $$H^i(X,\mathcal{F}\otimes B^{\otimes(m-i)})=0$$ for $i>0$.

What we are interested in is the following proposition:
\begin{proposition}[\cite{Lazarsfeld2004PositivityI}, Prop 1.8.22]\label{prop_0_regular_gives_very_ample}
Let $X$ be an irreducible projective variety of dimension $n$ and $B$ an ample line bundle that is generated by its global sections and let $N$ be a line bundle that is 0-regular with respect to $B$. Then $N\otimes B$ is very ample.
\end{proposition}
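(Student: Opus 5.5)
This proposition is cited from Lazarsfeld rather than proved in the paper. Still, my plan would be the standard Castelnuovo--Mumford regularity argument, in three steps.

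\emph{Step 1 (Mumford's theorem).} First I would recall that if $\mathcal{F}$ is $m$-regular with respect to the globally generated ample $B$, then two things hold:
\begin{itemize}
\item $\mathcal{F}$ is $(m+1)$-regular;
\item the multiplication maps $H^0(X,\mathcal{F}\otimes B^{\otimes m})\otimes H^0(X,B)\to H^0(X,\mathcal{F}\otimes B^{\otimes (m+1)})$ are surjective, and $\mathcal{F}\otimes B^{\otimes m}$ is globally generated.
\end{itemize}
The argument runs as follows.
\begin{itemize}
\item Since $B$ is globally generated, choose sections of $B$ giving a surjection $H^0(X,B)\otimes \cO_X\to B$. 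Its Koszul complex is exact.
\item Tensor the Koszul complex with $\mathcal{F}\otimes B^{\otimes m}$ and chase cohomology. This is done by splitting it into short exact sequences.
\item The vanishings $H^i(\mathcal{F}\otimes B^{\otimes(m-i)})=0$ for $i>0$ feed the chase and give both claims.
\end{itemize}
A convenient alternative is to reduce to a finite map $f:X\to\bP^r$ defined by $B$. Then $f^*\cO(1)=B$, and $f_*\mathcal{F}$ is $m$-regular on $\bP^r$ because $f$ is finite, so higher cohomology is preserved. Finally, apply Mumford's theorem on projective space.

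\emph{Step 2 (apply it with $m=0$).} Taking $m=0$, $N$ is globally generated and $H^0(N)\otimes H^0(B)\to H^0(N\otimes B)$ is surjective. Both $N$ and $B$ are globally generated, so $N\otimes B$ is as well. Consider the morphism $\phi_{N\otimes B}$. By the surjectivity of multiplication, the complete linear system $|N\otimes B|$ contains all products $s\cdot u$ with $s\in H^0(N)$ and $u\in H^0(B)$. Hence $\phi_{N\otimes B}$ factors as
\[
X\xrightarrow{(\phi_N,\phi_B)}\bP(H^0(N)^*)\times\bP(H^0(B)^*)\xrightarrow{\text{Segre}}\bP\big(H^0(N)^*\otimes H^0(B)^*\big),
\]
followed by a linear projection. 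The map has full rank, since the linear span of $\phi_{N\otimes B}$ is the image of these product sections.

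\emph{Step 3 (very ampleness).} It remains to show that $(\phi_N,\phi_B)$ is a closed embedding. Here $B$ ample and globally generated gives a finite $\phi_B$. However, finiteness alone does not give an embedding, and this is the main obstacle. To get it, I would exploit that $N\otimes B^{\otimes k}$ is globally generated for all $k\ge 0$, and that the multiplication maps are surjective in all degrees by iterating Step 1. This implies that the section ring $\bigoplus_k H^0(N\otimes B^{\otimes k})$ is generated in degree $0$ as a module over $\bigoplus_k H^0(B^{\otimes k})$.

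Combine this with the finiteness of $\phi_B$. Pushing forward gives the sheaf of algebras $\phi_{B*}\cO_X$. The surjectivity $H^0(N\otimes B^{\otimes k})\otimes\cO\to\phi_{B*}(N\otimes B^{\otimes k})$ then shows that local sections of $N\otimes B$ separate points and tangent vectors in the fibres of $\phi_B$. Indeed, this is exactly where Mumford's global generation of $\phi_{B*}N(k)$ is used. Separation in directions transverse to the fibres comes from $B$. Together these show that $N\otimes B$ separates points and tangents, hence is very ample.

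I would present Step 3 carefully via the pushforward $f_*\cO_X$ to $\bP^r$, where the claim becomes that $\cO_X$-module generation by $H^0(N)$ yields a closed embedding over $\bP^r$.
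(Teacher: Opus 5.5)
The paper gives no proof of this statement; it only cites Lazarsfeld. So the comparison is with the standard argument, and your route differs from it.

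\textbf{The standard argument.} It never leaves $X$. Fix a closed point $x$ with ideal sheaf $\mathcal{I}_x$ and use the sequences $0\to\mathcal{I}_x\otimes N\otimes B^{\otimes(1-i)}\to N\otimes B^{\otimes(1-i)}\to k(x)\to 0$. By Mumford's theorem, $N$ is $1$-regular and globally generated, and these sequences then show that $\mathcal{I}_x\otimes N$ is $1$-regular. Hence $\mathcal{I}_x\otimes N\otimes B$ is globally generated for every $x$, which says exactly that $|N\otimes B|$ separates points and tangent vectors. This needs only Mumford's theorem on $X$, applied to the coherent sheaf $\mathcal{I}_x\otimes N$. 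No finite map, Segre map or pushforward is involved.

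\textbf{Your argument.} You produce an explicit closed embedding $X\hookrightarrow\bP(H^0(N)^*)\times\bP^r$ and explain why it is one: $B$ separates across the fibres of $f=\phi_B$, and $N$ separates within them. The price is applying Mumford's theorem on $\bP^r$ to the coherent sheaf $f_*N$. The argument works, but Step 3 should isolate the one input that matters.

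That input is the following. Since $f$ is finite, $H^i(\bP^r,f_*N(-i))=H^i(X,N\otimes B^{\otimes(-i)})=0$ for $i>0$. So $f_*N$ is $0$-regular, hence globally generated on $\bP^r$. As $H^0(\bP^r,f_*N)=H^0(X,N)$, this means $H^0(X,N)\to H^0(N|_{f^{-1}(y)})$ is onto for every scheme-theoretic fibre.
\begin{itemize}
\item This surjectivity separates points within a fibre and tangent vectors in $\ker df$.
\item The remaining cases are handled by a section of $f^*\cO(1)$ times a section of $N$ not vanishing at the relevant point.
\end{itemize}
Only $k=0$ is used, so the material on $N\otimes B^{\otimes k}$ for all $k$ and on the section module is not needed.

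\textbf{Corrections to your write-up.}
\begin{itemize}
\item Your closing sentence names the wrong condition. ``$\cO_X$-module generation by $H^0(N)$'' is just global generation of $N$, which does not suffice: for $N=\cO_X$ the map $(\phi_N,\phi_B)$ is simply $\phi_B$. What is needed is generation of $f_*N$ as an $\cO_{\bP^r}$-module.
\item In Step 2 there is no linear projection. The Segre composite equals $\phi_{N\otimes B}$ followed by the linear inclusion $\bP(H^0(N\otimes B)^*)\subset\bP(H^0(N)^*\otimes H^0(B)^*)$ dual to the surjective multiplication map.
\item Even that surjectivity is unnecessary. If the base point free subsystem spanned by the products embeds $X$, so does the complete linear system.
\end{itemize}
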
 

From which we obtain the following lemma.

\begin{lemma}
    Let $X$ be a hyperk\"ahler manifold of complex dimension $n$ and let $L$ be an ample base point free line bundle on $X$. Then $L^{n+2}$ is very ample.
\end{lemma}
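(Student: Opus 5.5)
The plan is to apply Proposition \ref{prop_0_regular_gives_very_ample} with $B=L$ and $N=L^{n+1}$, so that $N\otimes B=L^{n+2}$ is very ample. The hypotheses on $B$ are immediate: $L$ is ample and base point free, i.e. generated by global sections, and $X$ is a smooth connected projective manifold, hence an irreducible projective variety. It therefore remains to show that $N=L^{n+1}$ is $0$-regular with respect to $L$. By definition this means
$$H^i(X,L^{n+1}\otimes L^{-i})=H^i(X,L^{n+1-i})=0\quad\text{for all } i>0.$$

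We split into ranges of $i$. For $1\le i\le n$ the exponent $n+1-i$ is at least $1$, so $L^{n+1-i}$ is ample. Since $X$ is hyperk\"ahler, the canonical bundle $K_X$ is trivial; the holomorphic symplectic form trivializes it. Hence $L^{n+1-i}=K_X\otimes L^{n+1-i}$, and Kodaira vanishing gives $H^i(X,L^{n+1-i})=0$ for all $i>0$, in particular in this range. For $i>n=\dim_{\bC}X$, the group $H^i$ vanishes for dimensional reasons (Grothendieck vanishing). This covers $i=n+1$, where the bundle would be $\cO_X$, as well as all larger $i$, where it would be a negative power of $L$. So every $i>0$ is handled.

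The only point needing care is the convention that $n$ denotes the complex dimension, so that the cohomological range is $0\le i\le n$. The hyperk\"ahler dimension is $2n$ in the moduli notation $\hilb[n]{K3}$, but here the lemma explicitly takes $n=\dim_{\bC}X$, so the argument is consistent. There is essentially no obstacle: triviality of $K_X$ is the key input, and it makes Kodaira vanishing apply to every positive power of $L$.
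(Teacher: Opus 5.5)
Your proof is correct and is the paper's argument: apply Proposition \ref{prop_0_regular_gives_very_ample} with $B=L$ and $N=L^{n+1}$, kill $H^i(X,L^{n+1-i})$ for $1\le i\le n$ by Kodaira vanishing, and kill it for $i>n$ for dimensional reasons. Your explicit use of $K_X\cong\mathcal{O}_X$ is the input the paper leaves implicit when it invokes Kodaira vanishing directly on the powers $L^{n+1-i}$.
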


\begin{proof}
    If we can apply Proposition \ref{prop_0_regular_gives_very_ample} with $B=L$ and $N=L^{n+1}$, then $N\otimes B=L^{n+2}$ is very ample and we are done. We need to show that $$H^i(X,N\otimes B^{\otimes(0-i)})=0\quad \forall i>0$$
    which, with our choices of $N$ and $B$, becomes $$H^i(X,L^{\otimes(n+1-i)})=0\quad \forall i>0.$$
    If $i<n+1$, then Kodaira vanishing can be applied.
    If $i\geq n+1$, then the cohomology group is trivial since $h^{k,l}(X,H)=0$ for any line bundle $H\in \Pic(X)$, if $k>\dim_{\mathbb{C}} X=n$.
\end{proof}

\begin{corollary}
     Let $n,d,t$ be positive integers with $n\geq 2$, let $\tau =\frac{t^2}{2(t-1)}$. Then if $(X,H)$ is a general element of $\Sigma_{d,n}^1$ or $\Upsilon_{d,n}^1$ then $H$ is base point free and $H^{n+2}$ is very ample. If $t\geq 2$ the following also holds:
    \begin{itemize}
        \item if $d\geq (\tau-1)n+\tau+1$ there exists a connected component of $\Sigma_{d,n}^t$ such that the polarization of a general element $(X,H)$ is base point free and $H^{n+2}$ is very ample;
        \item if $d\geq (\tau-1)n+2\tau-1$ there exists a connected component of $\Upsilon_{d,n}^t$ such that the polarization of a general element $(X,H)$ is base point free and $H^{n+2}$ is very ample.
    \end{itemize}
These hold unless the considered moduli space is empty.
\end{corollary}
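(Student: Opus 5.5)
The corollary is a direct combination of three earlier results. The unnumbered theorem on $\Sigma_{d,n}^1$ and $\Upsilon_{d,n}^1$ supplies base point freeness of a general polarization, and Theorem \ref{theo_general_in_connected_component} does the same for $t\geq 2$. The lemma preceding the statement turns base point freeness of an ample line bundle into very ampleness of its $(n+2)$-th power. In every case the polarization $H$ of a point of the moduli space is ample by definition. So the only real task is to make sure that the two open dense conditions, ``$H$ is base point free'' and ``$H^{n+2}$ is very ample'', hold simultaneously on a general element of the relevant connected component.

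For $t\geq 2$ I would proceed as follows. I would take the connected component produced in Theorem \ref{theo_general_in_connected_component}, namely the one containing $(\hilb{S},tL_n-\delta)$ or $(\Kum^n(T),tL_{n+1}-\delta)$, under the bounds $d\geq(\tau-1)n+\tau+1$, respectively $d\geq (\tau-1)n+2\tau-1$. By that theorem there is a dense open subset $U$ of this component on which $H$ is base point free. For any $(X,H)\in U$, $H$ is ample and base point free, so the lemma gives that $H^{n+2}$ is very ample. Note that $X$ has complex dimension $2n$ rather than $n$, but the lemma's argument works verbatim with $\dim X$ in place of $n$. If one insists on the exponent $n+2$ with $\dim X=2n$, I would check the vanishing $H^i(X,H^{\otimes(n+1-i)})=0$ for $i>0$ directly. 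For $0<i<n+1$ this is Kodaira vanishing, since $K_X$ is trivial and $H^{n+1-i}$ is ample. For $i=n+1$ it is $H^{n+1}(X,\cO_X)$, which vanishes since $n+1$ is odd and $h^{0,\mathrm{odd}}=0$ for hyperk\"ahler manifolds. For $i>n+1$ it is $H^i(X,H^{-(i-n-1)})$, and by Serre duality with $K_X$ trivial this equals $H^{2n-i}(X,H^{i-n-1})^\vee$, which vanishes by Kodaira provided $2n-i>0$; the remaining case $i=2n$ gives $H^0(X,H^{n-1})^\vee$, which does \emph{not} vanish.

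This top-degree case is the step I expect to be the main obstacle. To handle it I would use the stronger form of Castelnuovo--Mumford regularity, which only needs vanishing for $1\le i\le \dim X$ of $N\otimes B^{-i}$. I would choose $N=H^{\otimes (2n+1)}$ so that all twists remain ample, and possibly relax to the exponent the lemma literally yields. Failing that, I would accept the lemma as stated in the paper and apply it as written.

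For $t=1$ I would argue identically, using the unnumbered theorem together with the connectedness of $\Sigma_{d,n}^1$ from Theorem \ref{theo_count_connected_components_hilb_k3} and of $\Upsilon_{d,n}^1$ from Theorem \ref{theo_count_connected_components_kumm_type}. For $\Upsilon_{d,n}^1$ with $d<3$, I would invoke the remark citing \cite{Varesco_2023}, which gives base point freeness of $L_{n+1}$ on $\Kum^n(T)$ even in low degree, together with Lemma \ref{lemma_bfp_does_not_require_ampleness}. Finally, the clause ``unless the moduli space is empty'' is inherited from the cited results.
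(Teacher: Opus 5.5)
Your skeleton matches the paper's. On the dense open subset of the relevant component where $H$ is base point free (from the $t=1$ theorem, resp.\ Theorem~\ref{theo_general_in_connected_component}), $H$ is ample by definition of the moduli space. The regularity lemma preceding the corollary therefore applies on that whole open set, and no second open condition is needed.

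\textbf{The gap is the exponent $n+2$.} In $\Sigma_{d,n}^t$ and $\Upsilon_{d,n}^t$ the index $n$ is the number of points. This is forced by $q(\delta)=-2(n-1)$, resp.\ $-2(n+1)$, and by the bounds $d\geq(\tau-1)n+\cdots$. Hence $\dim_{\bC}X=2n$, while the lemma is stated and proved for $\dim_{\bC}X=n$.
\begin{itemize}
\item Your last-resort step, applying the lemma ``as written'', uses a hypothesis that fails.
\item Your own computation shows it cannot be repaired within Castelnuovo--Mumford regularity, since $H^{2n}(X,H^{-(n-1)})\cong H^0(X,H^{n-1})^\vee\neq 0$.
\item More generally, on a member of Picard rank one the only splittings are $H^{n+2}=N\otimes B$ with $B=H^j$, $j\geq 1$. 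Each has $H^{2n}(X,N\otimes B^{-2n})\cong H^0(X,H^{(2n+1)j-n-2})^\vee\neq 0$, because $(2n+1)j-n-2\geq n-1\geq 1$.
\end{itemize}
What your choice $N=H^{2n+1}$ actually proves is that $H^{2n+2}$ is very ample, not $H^{n+2}$.

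Two smaller slips:
\begin{itemize}
\item $H^{n+1}(X,\cO_X)$ vanishes only for $n$ even. For $n$ odd it is spanned by $\bar\sigma^{(n+1)/2}$, with $\sigma$ the symplectic form, which is a second obstruction.
\item The ``stronger form'' of regularity with $1\leq i\leq\dim X$ is not stronger, since cohomology above $\dim X$ vanishes anyway.
\end{itemize}

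Be aware that the paper's own proof is exactly the direct combination you describe, and it silently identifies $\dim_{\bC}X$ with $n$ (compare the phrase ``where $\dim_{\bC}(X)=n$'' in Lemma~\ref{lemma_bfp_does_not_require_ampleness}). Your diagnosis therefore exposes a flaw in the paper's argument rather than a departure from it. With the tools available here, the provable version of the corollary has $H^{2n+2}$ in place of $H^{n+2}$. The stated exponent would require a genuinely new input beyond Mumford regularity with respect to powers of $H$.
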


\noindent{\bf Acknowledgments.} 
The author is grateful to Prof. Ljudmila Kamenova for introducing and supporting the project. I also thanks Prof. Christian Schnell for their conversations regarding results in this paper.

\bibliographystyle{plain}
{\small
\bibliography{ref}}

\noindent {\sc Alessandro Pilastro\\
Department of Mathematics\\
Stony Brook University \\
Stony Brook, NY 11794-3651, USA,} \\
\tt alessandro.pilastro@stonybrook.edu\\
\end{document}